\documentclass[11pt]{amsart}

\title{Amenability and Uniqueness}

\author{A. Ciuperca}


\email{alin.ciuperca@gmail.com}

\author{T. Giordano}

\address{Department of Mathematics and Statistics\\
University of Ottawa\\
585 King Edward Avenue\\
Ottawa, Ontario\\
KiN 6N5\\
Canada}

\email{giordano@uottawa.ca}

\author{P. W. Ng}

\address{Department of Mathematics\\
University of Louisiana at Lafayette\\
217 Maxim D. Doucet Hall\\
P. O. Box 41010\\
Lafayette, Louisiana\\
70504-1010\\
USA}

\email{png@louisiana.edu}

\author{Z. Niu}

\address{Department of Mathematics and Statistics\\
Memorial University of Newfoundland\\
St. John's, NL\\
A1C 5S7\\
Canada}

\email{zniu@mun.ca}

\thanks{A.C. thanks AARMS for postdoctoral support. T.G. and Z.N. were
partially supported by a grant from NSERC Canada.}

\newtheorem{thm}{Theorem}[section]
\newtheorem{prop}[thm]{Proposition}
\newtheorem{lem}[thm]{Lemma}
\newtheorem{cor}[thm]{Corollary}
\newtheorem{df}{Definition}[section]
\theoremstyle{definition}
\newtheorem{rem}[thm]{Remark}

\newcommand{\A}{\mathcal{A}}
\newcommand{\B}{\mathcal{B}}
\newcommand{\C}{\mathcal{C}}
\newcommand{\F}{\mathcal{F}}
\newcommand{\K}{\mathcal{K}}
\newcommand{\G}{\mathcal{G}}
\newcommand{\Mul}{\mathcal{M}}
\newcommand{\M}{\mathbb{M}}
\newcommand{\N}{\mathcal{N}}

\newcommand{\h}{\mathcal{H}}
\newcommand{\s}{\mathcal{S}}

\def \bib(#1;#2;#3;#4;#5;#6) {{#1}, {\it #2} {#3},
{\bf#4} (#5) {#6}\par\smallskip}

\date{\today}
\subjclass[2010]{46L05, 46L10}

\numberwithin{equation}{section}

\begin{document}

\maketitle

\section{Introduction}

One of the most important achievements up to now in the theory of operator algebras is the complete classification up to isomorphism of 
injective von Neumann factors with separable preduals. This remarkable classification built on important earlier work by several mathematician was mostly accomplished by A. Connes with the final case settled by U. Haagerup. Part of Connes and Haagerup's  classification is the proof that several natural classes of factors (the injective, semi-discrete, approximately finite dimensional) coincide. This classification has very close connections and analogies with the theory of nuclear C*-algebras. Nuclear C*-algebras form an important class of C*-algebras since the important work of Choi and Effros, Connes, Effros and Lance, and Haagerup (see for example \cite{ChoiEffrosDuke}, \cite{ChoiEffros}. \cite{ConnesInjective}, \cite{EffrosLance}, \cite{HaagerupNUC} ) and are the main object of study in the Elliott classification program. This class includes important examples of C*-algebras coming from group representation theory, dynamical systems and mathematical physics.

The main result of this paper is a characterization of properly infinite injective von Neumann algebras and of nuclear C*-algebras by using a uniqueness theorem, based on generalizations of Voiculescu's famous Weyl-von Neumann theorem. 

Before stating Hadwin's generalization of 
Voiculescu's theorem, proved in \cite{HadwinTAMS}, let us recall that if $\h$ 
is a Hilbert space and $T\in \mathbb{B}(\h)$, then the rank of $T$, denoted by 
rank$(T)$ is the dimension of the closure of the range of $T$.

\begin{thm}
[\cite{HadwinTAMS}] Let $\A$ be a C*-algebra and $\h$ be a Hilbert space. 
Let
$\phi, \psi$ be two *-homomorphisms from $\A$ to $\mathbb{B}(\h)$. 

Then a necessary and sufficient condition for $\phi$ and $\psi$ to be approximately unitarily equivalent is that
$$\text{rank}\, (\phi(a)) = \text{rank} \,(\psi(a)),\;\; \text{ for all}\;  a \in \A\,.$$  
\end{thm}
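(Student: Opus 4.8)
The plan is to treat the two implications separately, with the forward (necessity) direction being elementary and the converse (sufficiency) carrying all the weight.

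For necessity, suppose $\phi$ and $\psi$ are approximately unitarily equivalent, so that there are unitaries $u_n$ on $\h$ with $\|u_n\phi(a)u_n^\ast - \psi(a)\| \to 0$ for every $a \in \A$. The key observation is that for each finite $n$ the set $\{T \in \mathbb{B}(\h) : \operatorname{rank}(T) \le n\}$ is norm-closed, so $\operatorname{rank}$ is lower semicontinuous with respect to norm convergence: if $T_k \to T$ in norm then $\operatorname{rank}(T) \le \liminf_k \operatorname{rank}(T_k)$. Applying this to $T_k = u_k\phi(a)u_k^\ast$, whose rank equals $\operatorname{rank}(\phi(a))$ for all $k$, gives $\operatorname{rank}(\psi(a)) \le \operatorname{rank}(\phi(a))$; running the same argument with the unitaries $u_n^\ast$ yields the reverse inequality, and hence equality (this also covers the infinite-rank case, since a finite bound on one side would force a contradiction on the other).

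For sufficiency, assume $\operatorname{rank}(\phi(a)) = \operatorname{rank}(\psi(a))$ for all $a \in \A$. It suffices to produce, for each finite subset $F \subseteq \A$ and each $\varepsilon > 0$, a single unitary $u$ with $\|u\phi(a)u^\ast - \psi(a)\| < \varepsilon$ for $a \in F$. I would first reduce the (possibly non-separable) problem to a separable one: replacing $\A$ by the separable subalgebra generated by $F$ and decomposing $\h$ into mutually orthogonal separable reducing subspaces, one builds the desired unitary piece by piece and assembles it by a Zorn's-lemma maximality argument, checking that the hypothesis on $\operatorname{rank}$ descends to the separable pieces. In the separable setting the engine is Voiculescu's non-commutative Weyl--von Neumann theorem. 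The rank data encode exactly the invariants that theorem requires: first, $a \in \ker\phi$ if and only if $\operatorname{rank}(\phi(a)) = 0$, so the hypothesis forces $\ker\phi = \ker\psi$; second, the values of $\operatorname{rank}(\phi(a))$, as $a$ ranges over elements whose images have prescribed finite rank, pin down the multiplicity with which each finite-multiplicity irreducible subrepresentation occurs. I would therefore split $\phi = \phi_{\mathrm{f}} \oplus \phi_{\infty}$ and $\psi = \psi_{\mathrm{f}} \oplus \psi_{\infty}$ into a part built from atoms of finite multiplicity and a complementary ``essential'' part, match the essential parts up to approximate unitary equivalence using Voiculescu's absorption theorem and only the equality of kernels, and match the rigid finite-multiplicity atomic summands by genuine unitary equivalence, the equality of ranks being precisely what guarantees that these finite multiplicities coincide.

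The main obstacle is the bookkeeping for the atomic part: one must verify that the single scalar invariant $a \mapsto \operatorname{rank}(\phi(a))$, evaluated over all of $\A$, genuinely separates the full multiplicity data --- distinguishing finite from infinite multiplicity and recovering the exact finite values --- which requires locating elements $a$ for which $\phi(a)$ has controlled finite rank and so ties the argument to the structure of $\phi(\A) \cap \K$. Combining this multiplicity analysis with the transfinite reduction to separable reducing subspaces, so that the locally constructed unitaries fit together into one unitary on all of $\h$, is the delicate point where the proof must be carried out with care.
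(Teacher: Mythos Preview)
The paper does not prove this statement: Theorem~1.1 is quoted from \cite{HadwinTAMS} as a known result, serving as background and motivation for the paper's own generalizations (Theorems~1.2--1.4). There is therefore no proof in the paper to compare your proposal against.

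On its own merits, your outline is broadly in the spirit of Hadwin's original argument. The necessity direction is fine once one notes that norm convergence of a net allows passage to a convergent \emph{sequence}, after which your lower-semicontinuity argument (the set $\{T:\operatorname{rank}(T)\le\kappa\}$ is closed under sequential norm limits, for any cardinal $\kappa$) goes through. For sufficiency, the decomposition into a finite-multiplicity atomic summand plus an essential summand absorbed via Voiculescu is the right architecture in the separable case; the genuine work, as you correctly flag, lies in (i) showing that the scalar function $a\mapsto\operatorname{rank}(\phi(a))$ really recovers all the finite multiplicities of irreducible subrepresentations, and (ii) organizing the non-separable reduction so that the locally constructed unitaries cohere. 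Point (ii) is where Hadwin's paper does substantial work---one cannot simply chop $\h$ into separable pieces reducing both $\phi$ and $\psi$ while preserving the rank hypothesis on each piece without a careful transfinite construction---so your acknowledgement that this is ``the delicate point'' is well placed, but as written it is an identification of the difficulty rather than a resolution of it.
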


Our first goal in this paper is to present versions of Theorem 1.1, where either $B(H)$ is replaced by a semidiscrete von Neumann algebra or the C*-algebra $A$ is nuclear and weaker notions of approximate unitary equivalence are used. 

Let us introduce these notions.

\begin{df}
Let $\A, \B$ be C*-algebras, with $\B$ unital, and let  
$\phi$ and $ \psi$ be two *-homomorphisms from $\A$ to $\B$. Then 

\item{(a)} $\phi$ and $ \psi$ are said to be approximately unitarily equivalent
 if there exists a net $\{u_{\alpha}\}$ of unitaries in $\B$ such that for 
all $a \in \A$
$$u_{\alpha} \phi(a) u_{\alpha}^* \rightarrow \psi(a)\quad 
\text{in the norm topology on}\;\; \B\,.$$

\item{(b)} $\phi$ and $ \psi$ are said to be weakly approximately unitarily 
equivalent if there exist two nets $\{ u_{\alpha} \}$ and $\{ v_{\beta} \}$ 
of unitaries
in $\B$ such that for all $a \in \A$,
$$u_{\alpha} \phi(a) u_{\alpha}^* \rightarrow \psi(a)$$
and
$$v_{\beta} \psi(a) v_{\beta}^* \rightarrow \phi(a)$$
in the relative weak topology on $\B$ (ie., the $\sigma(\B,\,\B^*)$-topology). 

\item{(c)} If $\B$ is a von Neumann algebra $\Mul$
, then $\phi$ and $ \psi$ are weak* approximately unitarily equivalent if there exist two nets $\{ u_{\alpha} \}$ and $\{ v_{\beta} \}$ of unitaries
in $\Mul$
 such that for all $a \in \A$,
$$u_{\alpha} \phi(a) u_{\alpha}^* \rightarrow \psi(a)$$
and
$$v_{\beta} \psi(a) v_{\beta}^* \rightarrow \phi(a)$$
in the weak*  topology on $\Mul$
 (i.e., the $\sigma(\Mul,\,\Mul_*)$-topology). 
\end{df}

These notions of approximate unitarily equivalence have been previously defined and studied, with $\B = \mathbb{B}(\h)$ (see  \cite{ArvesonDuke}, 
\cite{Davidson} and \cite{HadwinIND}) and in \cite{DingHadwin} for a 
von Neumann algebra $\B$. (The relationship between our work and \cite{DingHadwin} is \label{prop:II_1uniqueness} for a nuclear C*-algebra.) 

Before stating our first result, let us recall the following notion of rank introduced by Hadwin in  \cite{HadwinTAMS}:
two *-homomorphisms  $\phi$ and $ \psi$ from a C*-algebra $\A$
to a von Neumann algebra $\Mul$
have the same W*-rank if for each positive element $a \in \A$, the support projections of $\phi(a)$ and $\psi(a)$ are Murray-von Neumann equivalent in $\Mul$. 

Our first result is then:

\begin{thm} 
Let $\A$ be a C*-algebra and $\Mul$
 be a von Neumann algebra, and $\phi$ and $ \psi$ be two *-homomorphisms 
from $\A$ to $\Mul$.

If either $\A$ is nuclear or $\Mul$
 is semidiscrete, then for $\phi$ and $ \psi$ to be weak* approximately unitarily equivalent, it is necessary and sufficient that they have the same W*-rank.
\end{thm}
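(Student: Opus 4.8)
\emph{Structure of the argument.} I would prove the two implications separately. Necessity is the softer half and rests on two facts: unitary conjugation preserves the Murray--von Neumann class of a support projection exactly, and Murray--von Neumann comparison is lower semicontinuous under weak* limits. Sufficiency is the substantial half and is a Voiculescu--Hadwin type construction of the intertwining unitaries; it is precisely here that the approximation property (nuclearity of $\A$ or semidiscreteness of $\Mul$) is used, in order to force the unitaries to lie \emph{inside} $\Mul$ and to control the errors in the $\sigma(\Mul,\Mul_*)$-topology.

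\emph{Necessity.} Suppose $u_\alpha\phi(a)u_\alpha^*\to\psi(a)$ weak* for all $a$. Fix $a\ge 0$ and write $p=s(\phi(a))$, $q=s(\psi(a))$. For $\epsilon>0$ pick a continuous $g_\epsilon$ with $g_\epsilon(0)=0$, $0\le g_\epsilon\le 1$, $g_\epsilon\equiv 1$ on $[\epsilon,\infty)$ and $\mathrm{supp}\,g_\epsilon\subseteq(\epsilon/2,\infty)$. Since $\phi,\psi$ are $*$-homomorphisms, $\phi(g_\epsilon(a))=g_\epsilon(\phi(a))$ and $\psi(g_\epsilon(a))=g_\epsilon(\psi(a))$; the point is that $g_\epsilon(a)$ is a genuine element of $\A$, so I may apply the hypothesis to it and obtain $u_\alpha g_\epsilon(\phi(a))u_\alpha^*\to g_\epsilon(\psi(a))$ weak*, sidestepping the fact that functional calculus is not itself weak*-continuous. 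The support projection of $u_\alpha g_\epsilon(\phi(a))u_\alpha^*$ equals $u_\alpha\,\chi_{(\epsilon/2,\infty)}(\phi(a))\,u_\alpha^*$, which is unitarily equivalent to a subprojection of $p$. Letting $q_\epsilon=\chi_{[\epsilon,\infty)}(\psi(a))=\chi_{\{1\}}(g_\epsilon(\psi(a)))$, a weak*-lower-semicontinuity lemma for comparison (proved by reducing to the central-valued dimension on the finite part via the comparison theorem, and using coarseness of $\sim$ on the properly infinite part) yields $q_\epsilon\preceq p$. Letting $\epsilon\downarrow 0$ gives $q_\epsilon\uparrow q$ and hence $q\preceq p$. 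The symmetric hypothesis on the net $\{v_\beta\}$ gives $p\preceq q$, and Kaplansky's Schr\"oder--Bernstein theorem for von Neumann algebras yields $p\sim q$, i.e.\ the same W*-rank.

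\emph{Sufficiency.} After the standard reduction, it suffices to produce, for each finite $F\subseteq\A$, each finite family of normal functionals, and each $\epsilon>0$, a single unitary $u\in\Mul$ with $|\omega(u\phi(a)u^*-\psi(a))|<\epsilon$ over the chosen data. The plan is to factor the problem through matrix algebras: when $\Mul$ is semidiscrete one has normal unital completely positive maps $\Mul\xrightarrow{\alpha_i}\M_{n_i}\xrightarrow{\beta_i}\Mul$ with $\beta_i\alpha_i\to\mathrm{id}_\Mul$ point-weak* (and the nuclear case is treated symmetrically, factoring $\mathrm{id}_\A$ through matrices), so it is enough to match the finite-dimensional compressions of $\phi$ and $\psi$ and transport the resulting matrix unitaries back through $\beta_i$ and into $\Mul$. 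At the finite-dimensional level, approximate unitary equivalence of the relevant maps is governed by ranks, i.e.\ by the traces of support projections, and the equal-W*-rank hypothesis supplies exactly the matching of these dimensions needed to build the intertwiners, in the spirit of Hadwin's Theorem~1.1.

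\emph{Main obstacle.} I expect the crux to be the finite-dimensional matching in the sufficiency direction: converting the \emph{global} and possibly coarse Murray--von Neumann equivalence of support projections in a general (in particular non-tracial, type III) $\Mul$ into \emph{uniform approximate} rank equalities at each finite stage $\M_{n_i}$, and doing so compatibly across the net so that the transported unitaries remain in $\Mul$ and the weak* errors stay controlled simultaneously over all the prescribed functionals. Handling the non-finite part of $\Mul$—where the central-valued dimension is unavailable—will require an absorption argument rather than a direct trace comparison, and this is the step I would develop most carefully.
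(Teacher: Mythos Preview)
Your necessity argument is essentially sound and in the same spirit as the paper's (which also reduces to trace/dimension arguments on the finite part and coarseness on the type~III part); the lower-semicontinuity lemma you invoke is correct for factors and can be globalized.

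The sufficiency sketch, however, has a genuine gap at exactly the point you flag as the ``main obstacle,'' and the mechanism you propose for it does not work. Factoring $\mathrm{id}_\Mul$ (or $\mathrm{id}_\A$) through matrix algebras gives completely positive maps $\alpha_i,\beta_i$, but $\alpha_i\circ\phi$ and $\alpha_i\circ\psi$ are merely u.c.p.\ maps into $\M_{n_i}$, not $*$-homomorphisms, so Hadwin's rank criterion does not apply to them; and even if you produced unitaries $w_i\in\M_{n_i}$, $\beta_i(w_i)$ is only a contraction in $\Mul$, and conjugation does not commute with $\beta_i$, so there is no way to ``transport the matrix unitaries back'' into unitaries implementing the equivalence in $\Mul$. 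More fundamentally, nothing in your scheme explains how equal W*-rank in $\Mul$ forces any compatibility at the matrix stages.

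The paper's route is quite different in structure. It does not try to match $\phi$ and $\psi$ through the c.p.\ factorizations at all. Instead it first treats \emph{factors} with separable predual case by case. For a finite $\Mul$ it uses hyperfiniteness of $\A''$ to approximate elements by a finite-dimensional subalgebra $F\subseteq\A''$, and then equal traces give an honest unitary in $\Mul$ conjugating $F$ onto $\overline{\psi}(F)$. For a properly infinite factor the key step is to show that the \emph{target map} $\psi$ (restricted off the Breuer ideal) can be approximated pointwise by maps $a\mapsto v^*av$ with $v$ a partial isometry \emph{in $\Mul$}; this uses the c.p.\ factorizations only to decompose $\psi$ into building blocks $a\mapsto m\,\rho_n(a_0^*aa_0)\,m^*$ with $\rho$ a pure state, and then the excision-of-pure-states lemma plus the properly infinite structure of $\Mul$ convert each such block into a compression $a\mapsto v^*av$. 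The finite (Breuer-ideal) part is handled by the type~$II_1$ result and glued on. Partial isometries are then upgraded to unitaries. Non-factor $\Mul$ with separable predual is obtained by direct-integral decomposition and measurable selection; the fully general case by reducing to separable $\A$ and countably generated $\Mul$. The essential idea you are missing is that the approximation property is used to realize $\psi$ itself (approximately) as an inner compression of the inclusion, not to produce unitaries at a finite-dimensional stage.
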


The proof of Theorem 1.2 is obtained by first considering the case of factors with separable preduals, then of general von Neumann algebras also with separable preduals, using reduction theory, before proving the general case. 

Theorem 1.2 and its converse for a properly infinite von Neumann algebra give a new characterization of semidiscreteness. Indeed we have:

\begin{thm} Let $\Mul$ be a
properly infinite von Neumann algebra.
Then the following statements are equivalent:

\begin{enumerate}
\item $\Mul$ is semidiscrete.
\item Let $\A$ be a C*-algebra and $\phi, \psi : \A \rightarrow \Mul$
two *-homomorphisms. Then $\textrm{W*-rank}(\phi) = \textrm{W*-rank}(\psi)$ if and only if $\phi$ and $\psi$ are
weak* approximately unitarily equivalent.
\end{enumerate}
\label{thm:InjectivityCharacterization}
\end{thm}

As in Theorem 1.3, our new characterization for nuclear C*-algebras uses two uniqueness properties for C*-algebras. To state it we need first (see  
Definition \ref{df:W*-rank}) to introduce a notion of C*-rank for pairs 
of *-homomorphisms from a C*-algebra $\A$ to a unital one $\B$, based on 
studies of the Cuntz semigroup associated to a C*-algebra.

\begin{thm}  Let $\A$ be a separable C*-algebra.
Then the following are equivalent:
\begin{enumerate}
\item $\A$ is nuclear.
\item $\A$ has the weak* uniqueness property (ie. for every von Neumann algebra $ \Mul $, for all pairs of 
*-homomorphisms $\phi$ and $ \psi$ from $\A$ to $\Mul$,  $\phi$ and $\psi$ have the same W*-rank if and only if they are weak* approximatively unitarily equivalent).
\item $\A$ has the weak uniqueness property (ie. for every unital C*-algebra 
$\B$, for all pairs of 
*-homomorphisms $\phi$ and $ \psi$ from $\A$ to $\B$,  $\phi$ and $ \psi$ have the same C*-rank if and only if they are weakly approximatively unitarily equivalent).

\end{enumerate}
\label{thm:nuclearitycharacterization}
\end{thm}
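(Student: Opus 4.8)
The plan is to prove Theorem~\ref{thm:nuclearitycharacterization} by establishing the cycle of implications $(1)\Rightarrow(2)\Rightarrow(3)\Rightarrow(1)$, leaning on Theorem 1.2 for the first implication and reserving the genuinely new work for the converse $(3)\Rightarrow(1)$. For $(1)\Rightarrow(2)$, assume $\A$ is nuclear. Given any von Neumann algebra $\Mul$ and *-homomorphisms $\phi,\psi:\A\to\Mul$, Theorem 1.2 applies directly (its hypothesis ``$\A$ nuclear or $\Mul$ semidiscrete'' is met by the first alternative), yielding that $\phi$ and $\psi$ are weak* approximately unitarily equivalent if and only if they have the same W*-rank. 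Thus $\A$ has the weak* uniqueness property, and this step is essentially immediate.

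The implication $(2)\Rightarrow(3)$ I expect to be a matter of connecting the two uniqueness properties through a representation-theoretic passage. The strategy is to deduce the weak uniqueness property (stated for an arbitrary unital C*-algebra $\B$) from the weak* version (stated for von Neumann algebras) by faithfully representing $\B$ on a Hilbert space and passing to the von Neumann algebra $\Mul=\B''$ it generates. First I would verify that the C*-rank condition on $\phi,\psi:\A\to\B$ transfers to a W*-rank condition for the compositions with the embedding $\B\hookrightarrow\Mul$; here the point is that Cuntz-equivalence of positive elements in $\B$ (governing the C*-rank, via Definition~\ref{df:W*-rank}) forces Murray--von Neumann equivalence of the corresponding support projections in $\Mul$. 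Applying hypothesis $(2)$ then gives weak* approximate unitary equivalence in $\Mul$, and the remaining task is to descend this back to weak approximate unitary equivalence in $\B$, using that the weak topology on $\B$ is the restriction of the weak* topology on $\Mul$ together with a standard approximation argument (e.g.\ Kaplansky-type density) to replace the $\Mul$-unitaries by unitaries in $\B$ up to the required weak approximation.

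The main obstacle, and the crux of the theorem, is the converse $(3)\Rightarrow(1)$: from a purely uniqueness-theoretic hypothesis one must recover nuclearity. The plan is contrapositive in spirit---use the failure of nuclearity to construct two *-homomorphisms with the same C*-rank that are nonetheless not weakly approximately unitarily equivalent. The natural tool is the completely positive approximation property: $\A$ is nuclear precisely when the identity factors approximately through matrix algebras in the point-norm topology by completely positive contractions. I would exploit the characterization of semidiscreteness via nuclearity for the enveloping von Neumann algebra, building a representation $\pi:\A\to\mathbb{B}(\h)$ into which the uniqueness hypothesis can be fed, and arguing that if the weak uniqueness property holds then the relevant approximate factorizations can be assembled to witness nuclearity. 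The delicate part will be producing a concrete pair $\phi,\psi$ whose C*-rank coincides while weak approximate unitary equivalence fails whenever the completely positive approximation property is absent; matching the rank invariant on the Cuntz semigroup while keeping the two maps weakly inequivalent is exactly where the topological and order-theoretic data must be balanced against each other.

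Throughout, the technical heart lies in the interplay between the Cuntz semigroup data encoded in the C*-rank and the operator-algebraic approximation properties, so I would prove the two routine implications first to isolate and concentrate effort on $(3)\Rightarrow(1)$.
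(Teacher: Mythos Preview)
Your treatment of $(1)\Rightarrow(2)$ is correct and matches the paper. For $(2)\Rightarrow(3)$ you are in the right neighborhood: the paper passes to $\B^{**}$ (equivalently $\B''$ in the universal representation), converts C*-rank to W*-rank via the Ortega--R\o rdam--Thiel result relating dimension functions to Murray--von Neumann equivalence of support projections, applies $(2)$, and then pulls the resulting $\B^{**}$-unitaries back to unitaries in $\B$ by strong density. One point you gloss over is that the C*-rank $\Rightarrow$ W*-rank step requires separability of $\B$; the paper's working definition of the weak uniqueness property quantifies only over separable unital codomains precisely for this reason.

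The real gap is in $(3)\Rightarrow(1)$. Your plan oscillates between a contrapositive construction (produce $\phi,\psi$ with equal C*-rank but weakly inequivalent whenever $\A$ is non-nuclear) and a direct argument (assemble approximate factorizations from the uniqueness hypothesis), and neither is made concrete. The contrapositive route is not how the paper proceeds, and it is not clear how you would build such a pair: failure of the CPAP does not hand you an explicit obstruction to weak approximate unitary equivalence of two rank-matched maps.

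The paper's argument is direct and rests on a specific mechanism you are missing. One shows that $\Mul = \A^{**}\overline{\otimes}\mathbb{B}(\h)$ is approximately finite dimensional (hence injective, hence $\A^{**}$ is injective, hence $\A$ is nuclear by Choi--Effros). To approximate given elements $x_k = a_k\otimes b_k$ by elements of a finite-dimensional subalgebra in $\|\cdot\|^{\sharp}_{\rho}$, one builds two \emph{full} unital *-homomorphisms $\phi,\psi:\M_N(\tilde{\A})\to\Mul$: $\phi$ is essentially the natural inclusion (plus a full piece), while $\psi$ lands entirely in the injective slice $1_{\A^{**}}\otimes\mathbb{B}(\h)$. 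Fullness forces equal C*-rank inside a suitably chosen separable properly infinite subalgebra, so hypothesis $(3)$ makes $\phi$ and $\psi$ weakly---and, being unital, $\sigma$-strong*---approximately unitarily equivalent. A single unitary then transports the $x_k$ close to $\psi(x_k)\in 1\otimes\mathbb{B}(\h)$, where approximate finite-dimensionality is already available. This ``compare the identity to a map into an injective ambient algebra, then conjugate'' idea is the crux, and your proposal does not contain it.
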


In C*-algebra theory, uniqueness theorems play an important role both in extension theory (see for example   \cite{ArvesonDuke},  \cite{Linbook}, \cite{EK} ) and in Elliott classification program for nuclear C*-algebras, but there the approximately unitarily equivalence in norm is used. 

\bigskip
\noindent
{\bf Notations:} Let $\Mul$ be a von Neumann algebra and $\Mul_*$ be its predual. 

\noindent
-  For all $\rho \in \Mul_*$, let $||\cdot||_{w^*,\rho} $ denote the semi-norm defined for $x \in \Mul$ by
$$  ||x||_{w^*,\rho} = |\rho(x)| \;.$$
The weak* topology (or  $\sigma(\Mul, \Mul_*)$-topology), also called the $\sigma$-weak topology,  is the topology induced by the semi-norms $||\cdot||_{w^*,\rho}\,,\; \rho \in \Mul_*$.

\noindent
-  For all $\phi \in \Mul_*^+$, let $||\cdot||_{\phi} $ (resp. $||\cdot||_{\phi}^\sharp $) denote the semi-norm defined for $x \in \Mul$ by
$$  ||x||_{\phi} = \sqrt{\phi(x^*x)}\quad\text(resp.\;\; ||x||_{\phi}^\sharp = \sqrt{\phi(x^*x +xx^*)}\; )$$
The strong topology (or  $s(\Mul, \Mul_*)$-topology) is the topology induced by the semi-norms $||\cdot||_{\phi}\,,\; \phi \in \Mul_*^+$ and the 
strong* topology (or  $s^*(\Mul, \Mul_*)$-topology) the topology induced by the semi-norms $||\cdot||_{\phi}^\sharp\,,\; \phi \in \Mul_*^+$.

It is well-known (see for example \cite{TakesakiBook}, II. 4. 10) that on the unitary group $U(\Mul)$ all the above topologies coincide.

\section{Uniqueness for finite von Neumann algebra codomains}

\begin{prop}
Let $\Mul$ be a countably decomposable finite von Neumann algebra. Let $\A$ be a C*-algebra. 

Suppose that either $\A$ is nuclear or $\Mul$ is injective.

Let
$\phi, \psi : \A \rightarrow \Mul$ be two *-homomorphisms.
Then
$$\tau \circ \phi = \tau \circ \psi$$
for every normal tracial state $\tau$ on $\Mul$
if and only if
there exist two nets $\{ u_{\alpha} \}$ and $\{ v_{\beta} \}$ of unitaries
in $\Mul$ such that for all $a \in \A$,
$$u_{\alpha} \phi(a) u_{\alpha}^* \rightarrow \psi(a)$$
and
$$v_{\beta} \psi(a) v_{\beta}^* \rightarrow \phi(a)$$
where the convergence is in the $\sigma$-strong* topology on $\Mul$.
\label{prop:II_1uniqueness}
\end{prop}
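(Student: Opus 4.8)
The plan is to dispose of necessity quickly and then devote the work to sufficiency, where the hypotheses on $\A$ and $\Mul$ are used. For necessity, suppose the two nets exist. Since $\sigma$-strong$^*$ convergence of a norm-bounded net implies $\sigma$-weak convergence and every normal tracial state $\tau$ is $\sigma$-weakly continuous, one gets $\tau(u_\alpha\phi(a)u_\alpha^*)\to\tau(\psi(a))$; the trace property makes the left-hand side constantly $\tau(\phi(a))$, so $\tau\circ\phi=\tau\circ\psi$. For sufficiency, first use finiteness together with countable decomposability to fix a faithful normal tracial state $\tau_0$ (take a faithful normal state on the countably decomposable center $Z(\Mul)$ and compose it with the center-valued trace $T$). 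On norm-bounded sets the $\sigma$-strong$^*$ topology is induced by the single seminorm $\|\cdot\|_{\tau_0}^\sharp=\sqrt 2\,\|\cdot\|_{\tau_0}$, and the nets $u_\alpha\phi(a)u_\alpha^*$ are bounded by $\|a\|$; hence it suffices to produce, for every finite $F\subseteq\A$ and every $\epsilon>0$, a single unitary $u\in\Mul$ with $\|u\phi(a)u^*-\psi(a)\|_{\tau_0}<\epsilon$ for all $a\in F$, indexing the net by $(F,\epsilon)$ (the $v_\beta$-net being obtained symmetrically). I also record that $\tau\circ\phi=\tau\circ\psi$ for all normal tracial $\tau$ is equivalent to $T\circ\phi=T\circ\psi$.

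Next I extract structure from the trace identity. Because $\tau_0$ is faithful, $\phi(a)=0\iff\tau_0(\phi(a^*a))=0\iff\tau_0(\psi(a^*a))=0\iff\psi(a)=0$, so $\ker\phi=\ker\psi$ and $\psi\circ\phi^{-1}$ is a well-defined $\tau_0$-preserving $*$-isomorphism of the C$^*$-algebra $\phi(\A)$ onto $\psi(\A)$. Being $\tau_0$-isometric it extends to a normal $*$-isomorphism $\Theta\colon N:=\phi(\A)''\to\psi(\A)''\subseteq\Mul$ with $\Theta\circ\phi=\psi$; by normality and Kaplansky density the identity $T\circ\phi=T\circ\psi$ propagates to $T\circ\Theta=T$ on $N$. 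In both cases of the hypothesis $N$ is injective: if $\A$ is nuclear then the bicommutant $\phi(\A)''$ of a nuclear image is injective, while if $\Mul$ is injective then the $\tau_0$-preserving normal conditional expectation $\Mul\to N$ forces $N$ to be injective. By Connes' theorem $N$ is therefore hyperfinite (an AFD algebra in the sense of a directed family of finite-dimensional subalgebras dense in $\|\cdot\|_{\tau_0}$).

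It remains to prove the relative approximate-innerness statement that is the heart of the matter: \emph{if $N\subseteq\Mul$ is a hyperfinite von Neumann subalgebra and $\Theta\colon N\to\Mul$ is a normal $T$-preserving $*$-homomorphism, then the inclusion $\iota$ and $\Theta$ are approximately unitarily equivalent in $\|\cdot\|_{\tau_0}$.} Granting it, conjugating gives $u_\alpha\phi(a)u_\alpha^*\to\Theta(\phi(a))=\psi(a)$, and the same applied to $\Theta^{-1}$ on the hyperfinite algebra $\psi(\A)''$ yields the second net. To prove the statement, write $N$ as the $\|\cdot\|_{\tau_0}$-closure of finite-dimensional subalgebras $N_\lambda$; on each $N_\lambda$ the two $*$-homomorphisms $\iota|_{N_\lambda}$ and $\Theta|_{N_\lambda}$ of a finite-dimensional algebra into $\Mul$ have the same center-valued trace, hence are conjugate by an honest unitary $u_\lambda\in\Mul$ (comparison theory in the finite algebra $\Mul$: equality of $T$ on the relevant range projections makes them Murray--von Neumann equivalent, and the implementing partial isometry extends to a unitary). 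Finally, given $F$ and $\epsilon$, choose $N_\lambda$ so that each $\phi(a)$ ($a\in F$) lies within $\epsilon/2$ of $N_\lambda$ in $\|\cdot\|_{\tau_0}$; since both $x\mapsto u_\lambda x u_\lambda^*$ and $\Theta$ are $\|\cdot\|_{\tau_0}$-isometric, the triangle inequality gives $\|u_\lambda\phi(a)u_\lambda^*-\psi(a)\|_{\tau_0}<\epsilon$. The main obstacle is exactly this relative statement: assembling the pointwise finite-dimensional conjugacies into a genuine approximation requires the hyperfinite exhaustion of $N$ (hence amenability, via Connes), the center-valued comparison theory of the finite algebra $\Mul$, and the $2$-norm isometry of trace-preserving maps which prevents the errors from accumulating.
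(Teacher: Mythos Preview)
Your proof is correct and follows essentially the same route as the paper: reduce to $\phi$ being an inclusion, observe that the bicommutant $N=\phi(\A)''$ is hyperfinite (via Choi--Effros or the trace-preserving conditional expectation), extend $\psi$ normally to $N$, use that on each finite-dimensional subalgebra the two trace-preserving $*$-homomorphisms are conjugate by a unitary in $\Mul$, and finish with the triangle inequality using the $\|\cdot\|_\tau$-isometry of both maps. The only cosmetic difference is that you fix a single faithful normal tracial state $\tau_0$ up front (exploiting countable decomposability) and reduce to that one seminorm, whereas the paper works with an arbitrary finite family $\G$ of normal tracial states; both amount to the same thing since on bounded sets $\|\cdot\|_{\tau_0}$ induces the $\sigma$-strong$^*$ topology.
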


\begin{proof}
   The if direction is clear.  So it suffices to prove the only if direction.

   Note that the condition $\tau \circ \phi = \tau \circ \psi$ for
every $\tau$ implies that
$\phi$ and $\psi$ have the same kernel.
Hence, we may assume that both $\phi$ and $\psi$ are injective,
$\A \subset \Mul$, and
$\phi$ is the inclusion of $\A$ into $\Mul$.

   Let $\epsilon > 0$ and let $\F = \{ a_1, a_2, ...,
a_n \} \subset \A$ be a finite subset.  Let $\G$ be a finite collection of normal tracial states
on $\Mul$.  As $\A''$ is hyperfinite, there exists a finite dimensional von Neumann subalgebra $F
\subset \A''$ and $x_1, x_2, ..., x_n \in F$ such that $\| x_i - a_i \|_{\tau} < \epsilon$ for $1
\leq i \leq n$, and for  every $\tau \in \G$. (See, for example, \cite{ChoiEffros},
\cite{ConnesInjective}, \cite{ElliottNonseparableAFD}.)

   Let $\overline{\psi}$ be the extension of $\psi$ to $\A''$.
We have $\tau \circ \overline{\psi} = \tau$ for
every normal tracial state $\tau$ on $\Mul$.
Hence,
$$\| \psi(a_i) - \overline{\psi}(x_i) \|_{\tau} = \| \overline{\psi}(a_i -
x_i) \|_{\tau} = \| x_i - a_i \|_{\tau}$$
for every $\tau \in \G$ and
for $1 \leq i \leq n$.

Since $F$ and $\overline{\psi}(F)$ are finite dimensional
von Neumann subalgebras
of $\Mul$ and $\tau \circ \overline{\psi} = \tau$ for every normal
tracial state $\tau$ on $\Mul$, there exists
a unitary $u \in \Mul$ such that
$$\overline{\psi} |_F = Ad(u)$$

    Hence, for $1 \leq i \leq n$, and for every
$\tau \in \G$,
\begin{eqnarray*}
& & \| \psi(a_i ) - u a_i u^* \|_{\tau} \\
& \leq & \| \psi(a_i) - \overline{\psi}(x_i) \|_{\tau} +
\| \overline{\psi}(x_i) - u x_i u^* \|_{\tau} + \| u (x_i - a_i)
u^* \|_{\tau} \\
& < & \epsilon + 0 + \epsilon \\
& = & 2 \epsilon
\end{eqnarray*}
as required.
\end{proof}

\section{Uniqueness for properly infinite von Neumann factor codomains}

    We will need the following excision of pure states result (a generalization of
Glimm's Lemma), due to
Akemann, Anderson and Pedersen,  whose
proof can be found in \cite{AAP} Proposition 2.2.
(See also \cite{EK} Lemma 8 or
\cite{Linbook} Lemma 5.3.2 for a short proof of special case.):

\begin{lem}  Let $\A$ be a C*-algebra.  Let $\rho$
be a pure state on $\A$.  Then there exists a net $\{ a_{\alpha} \}$ of positive elements in $\A$
with $\| a_{\alpha} \| = \rho(a_{\alpha}) = 1$ for all $\alpha$, such that
\[
lim_{ \alpha } \| a_{\alpha} (a - \rho(a)) a_{\alpha} \| = 0.
\]
for all $a \in \A$.
\label{lem:excision}
\end{lem}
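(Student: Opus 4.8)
The plan is to realize the net over the directed set of pairs $(\F, \epsilon)$, where $\F \subseteq \A$ is finite and $\epsilon > 0$, ordered by $(\F, \epsilon) \leq (\F', \epsilon')$ iff $\F \subseteq \F'$ and $\epsilon \geq \epsilon'$. For each such pair it suffices to produce a single positive element $a = a_{\F,\epsilon}$ with $\|a\| = \rho(a) = 1$ and $\|a(b - \rho(b))a\| < \epsilon$ for all $b \in \F$; the desired net is then indexed by this directed set. First I would reduce the problem: writing each $b \in \F$ via its real and imaginary parts and replacing $b$ by $b - \rho(b)1$ in the unitization $\tilde{\A}$ (to which $\rho$ extends as a pure state), it is enough to treat a finite family of self-adjoint elements $x_1, \dots, x_n$ with $\rho(x_i) = 0$ and to find a positive contraction $a \in \A$ with $\rho(a) = 1$ and $\|a x_i a\| < \epsilon$, noting that each $a x_i a$ lies in $\A$.

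Next I would pass to the GNS representation $(\pi, \h_\rho, \xi)$ of $\rho$. Since $\rho$ is pure, $\pi$ is irreducible and $\xi$ is a unit cyclic vector with $\rho(\cdot) = \langle \pi(\cdot)\xi, \xi\rangle$; in particular $\langle \pi(x_i)\xi, \xi\rangle = 0$, so the vectors $\pi(x_i)\xi$ are orthogonal to $\xi$. The conceptual target is the minimal support projection $p$ of $\rho$ in the bidual $\A^{**}$: minimality of $p$ (which is precisely the translation of purity) gives $p y p = \rho(y) p$ for every $y \in \A$, so that a positive contraction $a \in \A$ lying under $p$ with $\rho(a) = 1$ would satisfy $a x_i a = \rho(x_i) a^2 = 0$ exactly. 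The real work is to approximate this ideal situation by genuine elements of $\A$.

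To build $a$ I would invoke Kadison's transitivity theorem for the irreducible representation $\pi$: letting $P = |\xi\rangle\langle\xi|$ be the rank-one projection onto $\mathbb{C}\xi$, $P$ is a positive contraction with $P\xi = \xi$ and $P\bigl(\pi(x_i)\xi\bigr) = 0$, so transitivity yields a positive contraction $a \in \A$ whose image $\pi(a)$ agrees with $P$ on the finite-dimensional subspace spanned by $\xi$ and the vectors $\pi(x_i)\xi$. This forces $\rho(a) = \langle \pi(a)\xi,\xi\rangle = 1$, hence $\|a\| = 1$, and arranges that $\pi(a x_i a)\xi = 0$, so that $a$ behaves like a compression to $\mathbb{C}\xi$ on the data coming from $\F$.

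The hard part will be passing from this representation-level control to the genuine C*-norm estimate $\|a x_i a\| < \epsilon$, since the norm of $a x_i a$ in $\A$ is a supremum over \emph{all} representations, not just $\pi$. This is exactly the point where purity is indispensable: the elementary two-point example of a mixed state on $\mathbb{C}^2$ shows that no such norm bound can hold for non-pure states, so controlling $\pi(a)$ alone cannot suffice, and the argument must exploit the rank-one collapse $p y p = \rho(y) p$ to force the compression $a(\cdot)a$ to concentrate at $\rho$ \emph{uniformly} across all representations. Concretely I would refine the choice of $a$ so that $\|(1-p)a\|$ is small in norm and then estimate $\|a x_i a - \rho(x_i)a^2\|$ by splitting $x_i = p x_i p + (1-p)x_i p + p x_i (1-p) + (1-p)x_i(1-p)$ and bounding each off-diagonal contribution by a multiple of $\|(1-p)a\|\,\|x_i\|$; establishing this simultaneous norm control for a finite family is the crux of the argument, and is precisely what is carried out in \cite{AAP}.
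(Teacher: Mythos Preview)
The paper gives no proof of this lemma at all; it simply quotes the statement and cites \cite{AAP} Proposition~2.2 (with \cite{EK} Lemma~8 and \cite{Linbook} Lemma~5.3.2 for a special case). Since your proposal ultimately defers the crux to the same reference, at that level it agrees with the paper.

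That said, the concrete mechanism you sketch for the hard step does not work as written. You propose to ``refine the choice of $a$ so that $\|(1-p)a\|$ is small in norm'' and then bound the pieces of $a x_i a$ via the splitting $x_i = p x_i p + (1-p)x_i p + p x_i(1-p) + (1-p)x_i(1-p)$. But once $0\le a\le 1$ and $\rho(a)=1$, Cauchy--Schwarz in the GNS representation forces $ap = pa = p$ in $\A^{**}$, so $(1-p)a = a - p$ and $\|(1-p)a\| = \|a-p\|$. This distance cannot be made small unless $p$ already lies in $\A$: for $\A = C[0,1]$ with $\rho$ evaluation at $0$, every admissible $a$ is continuous with $a(0)=1$, and evaluating at points $t>0$ gives $\|a-p\|_{\A^{**}} \ge \sup_{t>0} a(t) = 1$. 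Thus your off-diagonal bounds are vacuous, and the splitting via $p$ cannot deliver the norm estimate. The argument in \cite{AAP} does not attempt to approximate the support projection in norm; it stays inside $\A$ throughout, combining Kadison transitivity with a functional-calculus step on a suitable positive element of the hereditary kernel of $\rho$ to drive $\|a b_i a\|$ down directly. Your setup (the directed index set, the reduction to self-adjoint $x_i$ with $\rho(x_i)=0$, and the first application of transitivity) is correct and is indeed how the proof begins; only the final norm-control step needs to be replaced.
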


\begin{lem}
Let $\Mul$ be a properly infinite von Neumann algebra and let $\A$ be a C*-algebra with a faithful
state. Let $\sigma : \A \rightarrow \M_n (\mathbb{C})$ and $\eta : \M_n( \mathbb{C}) \rightarrow
\Mul$ be completely positive contractive maps. Let $\psi : \A \rightarrow \Mul$ be the completely
positive contractive map given by
\[
\psi =_{df} \eta \circ \sigma
\]
Then $\psi$ can be approximated in the pointwise-norm operator topology
by finite sums of maps of the form
\[
a \mapsto m \rho_n(a_0^* a a_0 ) m^*
\]
\noindent where $\rho$ is a pure state on $\A$, $\rho_n  =_{df} \rho \otimes id_{\M_n(\mathbb{C})}
: \M_n(\A) \rightarrow \M_n ( \mathbb{C})$ is the natural map induced by $\rho$, $m$ is a row
matrix in $\Mul^n$ and $a_0$ is a row matrix in $\A^n$. \label{lem:semidiscretemaps}
\end{lem}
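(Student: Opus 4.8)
The plan is to split $\psi=\eta\circ\sigma$ into its two factors and handle them separately, reducing the whole statement to a single approximation property of the completely positive contractive map $\sigma:\A\to\M_n(\mathbb{C})$. First I would dispose of $\eta$ by a Choi-matrix computation. Since $\eta:\M_n(\mathbb{C})\to\Mul$ is completely positive, its Choi element $P=[\eta(e_{ij})]_{i,j=1}^n$ is a positive element of $\M_n(\Mul)=\Mul\otimes\M_n(\mathbb{C})$, so it has a square root $W=P^{1/2}\in\M_n(\Mul)$. Setting $u^{(k)}=(W_{k1}^*,\dots,W_{kn}^*)\in\Mul^n$ for $1\le k\le n$, a direct calculation using $\eta(e_{ij})=\sum_k W_{ki}^*W_{kj}$ gives $\eta(x)=\sum_{k=1}^n u^{(k)}\,x\,(u^{(k)})^*$; that is, $\eta$ is a finite sum of elementary maps $\eta_k(x)=_{df}u^{(k)}x(u^{(k)})^*$. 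Hence $\psi=\sum_{k=1}^n\eta_k\circ\sigma$, and since each $\eta_k$ is a fixed bounded map it suffices to approximate $\sigma$ in the pointwise-norm topology on $\M_n(\mathbb{C})$ by finite sums of the maps $\sigma_{\rho,a_0}(a)=_{df}\rho_n(a_0^*aa_0)$; indeed $\eta_k\circ\sigma_{\rho,a_0}$ is exactly $a\mapsto u^{(k)}\rho_n(a_0^*aa_0)(u^{(k)})^*$, a building block of the required form with $m=u^{(k)}$.

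Next I would pass to the dual picture for $\sigma$. A completely positive map $\sigma:\A\to\M_n(\mathbb{C})$ corresponds bijectively and linearly to the positive functional $\omega_\sigma$ on $\M_n(\A)$ defined by $\omega_\sigma([a_{ij}])=\sum_{i,j}\sigma(a_{ij})_{ij}$ (positivity is seen from a Stinespring representation $\sigma(a)_{ij}=\langle\zeta_i,\pi(a)\zeta_j\rangle$, which makes $\omega_\sigma([a_{ij}])$ a sum of squared norms). Because $\M_n(\mathbb{C})$ is finite dimensional, pointwise-norm convergence of a net of such maps is the same as weak$^*$ convergence of the associated functionals on $\M_n(\A)$. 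A short computation identifies $\omega_{\sigma_{\rho,a_0}}$ with the vector functional $\langle\zeta,(\pi_\rho\otimes\mathrm{id})(\cdot)\zeta\rangle$ of the representation $\pi_\rho\otimes\mathrm{id}_{\M_n(\mathbb{C})}$ of $\M_n(\A)=\A\otimes\M_n(\mathbb{C})$, where $\pi_\rho$ is the GNS representation of the pure state $\rho$ with cyclic vector $\xi_\rho$ and $\zeta=(\pi_\rho(a_0^{(1)})\xi_\rho,\dots,\pi_\rho(a_0^{(n)})\xi_\rho)\in\h_\rho^{\,n}$. Since $\rho$ is pure, $\pi_\rho$ is irreducible, hence so is $\pi_\rho\otimes\mathrm{id}$, and conversely every irreducible representation of $\M_n(\A)$ arises this way.

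It therefore remains to show that $\omega_\sigma$ is a weak$^*$ limit of finite sums $\sum_l\omega_{\sigma_{\rho_l,a_0^{(l)}}}$. By the Krein--Milman theorem the state space of $\M_n(\A)$ is the weak$^*$-closed convex hull of its pure states, so after rescaling by $\|\omega_\sigma\|$ the functional $\omega_\sigma$ is a weak$^*$ limit of finite positive combinations $\sum_l\lambda_l\tau_l$ of pure states $\tau_l$. Each $\tau_l$ is a unit vector functional of an irreducible representation $\pi_{\rho_l}\otimes\mathrm{id}$; that representation being irreducible, the vector is cyclic, and using the density of $\pi_{\rho_l}(\A)\xi_{\rho_l}$ in the GNS space I can approximate it in norm by a vector of the form $\zeta=(\pi_{\rho_l}(a_0^{(j)})\xi_{\rho_l})_j$, absorbing the scalar $\lambda_l$ into $a_0^{(l)}$. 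By the previous paragraph this realizes $\lambda_l\tau_l$ as $\omega_{\sigma_{\rho_l,a_0^{(l)}}}$ up to a norm-small error. Feeding the resulting approximations of $\sigma$ back through $\psi=\sum_k\eta_k\circ\sigma$ then produces finite sums of building blocks, as required. The faithful state on $\A$ (and, as a concrete alternative, the excising elements furnished by Lemma \ref{lem:excision}) guarantees the supply of pure states used here, while proper infiniteness of $\Mul$ places the construction in the intended setting for the later results.

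The step I expect to be the main obstacle is the last one: matching an arbitrary pure state of $\M_n(\A)$ with the rigid building-block shape $m\,\rho_n(a_0^*aa_0)\,m^*$. The decomposition of $\eta$ and the dual reformulation are essentially bookkeeping, but this step forces one to describe the irreducible representations of $\M_n(\A)$ explicitly and to convert weak$^*$ approximation of functionals into norm approximation of cyclic vectors. The finite dimensionality of $\M_n(\mathbb{C})$ is what makes the passage from weak$^*$ to pointwise-norm on the $\sigma$-side free, and is the reason the whole argument goes through an intermediate matrix algebra rather than directly.
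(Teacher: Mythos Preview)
Your proposal is correct and follows the paper's two--step strategy (handle $\sigma$ and $\eta$ separately), but the execution differs in both steps. For $\eta$, the paper invokes \cite{Haagerup} Proposition~2.1, which uses the proper infiniteness of $\Mul$ to write $\eta(x)=mxm^*$ with a \emph{single} row matrix $m\in\Mul^n$; you instead use the Choi/Kraus decomposition to get $\eta=\sum_{k=1}^n u^{(k)}(\,\cdot\,)(u^{(k)})^*$, which is more elementary and valid in any target algebra. For $\sigma$, the paper simply cites \cite{AnanHave} Lemma~4.4 together with Krein--Milman; your dualization to positive functionals on $\M_n(\A)$ and identification of pure states via $\pi_\rho\otimes\mathrm{id}$ is essentially an unpacking of that citation. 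Your approach thus makes the argument self--contained and, as you partially notice, shows that neither proper infiniteness of $\Mul$ nor the faithful--state hypothesis on $\A$ is actually needed for the lemma as stated (your remark that the faithful state ``guarantees the supply of pure states'' is off: any C*-algebra has enough pure states, and your argument never uses that hypothesis). What the paper's route buys is brevity and the slightly sharper conclusion that in each building block a single $m$ suffices, which is convenient for the applications in Lemma~\ref{lem:Preproperlyinfinitenullideal}.
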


\begin{proof}

 Firstly, by \cite{AnanHave} Lemma 4.4 and the Krein--Milman Theorem,
the map $\sigma$ can be approximated on finite sets (i.e., in the pointwise norm topology) by
finite sums of maps of the form
$$ \A \rightarrow \M_n ( \mathbb{C}) : a \mapsto \rho_n(a_0^* a a_0)$$
where $a_0$ is a row matrix over $\A$ with length $n$ and where $\rho$ is a pure state on $\A$.

  Hence, to complete the proof, it suffices to prove
that there exists a row matrix $m$ over $\Mul$, with length $n$, such that $\nu$ can be expressed
in the form
$$\M_n \rightarrow \Mul : x \mapsto m x m^*.$$
But this follows from \cite{Haagerup} Proposition 2.1.

\end{proof}

The above lemma also works with the von Neumann algebra $\Mul$ replaced with a unital C*-algebra
which contains a unital copy of the Cuntz algebra $O_2$. The proof involves a variation on the
argument of \cite{Haagerup} Proposition 2.1.

   In order to make our arguments go through, we need to restrict
ourselves to the factor case.  We will eventually remove this condition.

   For a properly infinite von Neumann factor $\Mul$, we let
$\K_{\Mul}$ denote the Breuer ideal of $\Mul$, i.e., the C*-ideal of $\Mul$ generated by the
finite projections.  (Hence, if $\Mul$ is type III then $\K_{\Mul} = 0$.)

\begin{lem}  Let $\Mul$ be a countably decomposable properly infinite von Neumann factor.
Let $b_i \in \Mul$ for $1 \leq i \leq n$, and let $p', q' \in \Mul$ be infinite (i.e., $p', q'
\notin \K_{\Mul}$) projections.

Then there exist  infinite projections $p, q \in \Mul$ such that $p \leq p'$, $q \leq q'$ and $p
b_i q = 0$ for $1 \leq i \leq n$. \label{lem:NullCrossTerms}
\end{lem}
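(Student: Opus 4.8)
\emph{Reduction to a single element.} The plan is to induct on $n$. The case $n=0$ is vacuous (take $p=p'$, $q=q'$), so assume the statement for $n-1$ elements and let $p_0\le p'$, $q_0\le q'$ be infinite projections with $p_0 b_i q_0=0$ for $1\le i\le n-1$. Applying the single-element case (below) to $b_n$, with $p',q'$ replaced by the infinite projections $p_0,q_0$, produces infinite projections $p\le p_0$ and $q\le q_0$ with $p b_n q=0$. Since $p=pp_0$ and $q=q_0q$, for $i<n$ we get $p b_i q = p(p_0 b_i q_0)q = 0$, so $p,q$ kill all of $b_1,\dots,b_n$. Hence everything reduces to a single element $b$.

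\emph{Polar decomposition and the two easy cases.} Put $T=p'bq'$; since $p\le p'$ and $q\le q'$ give $pbq=pTq$, it suffices to find infinite $p\le p'$, $q\le q'$ with $pTq=0$. Write the polar decomposition $T=w|T|$ with $|T|=(T^*T)^{1/2}$, and let $e=w^*w$ be the right support projection of $T$ (so $e\le q'$) and $f=ww^*$ its left support (so $f\le p'$); recall $e\sim f$. If $q'-e$ is infinite, set $q=q'-e$ and $p=p'$: from $T=Te$ and $e(q'-e)=0$ we get $pbq=T(q'-e)=0$. If $p'-f$ is infinite, set $p=p'-f$ and $q=q'$: from $T=fT$ we get $pbq=(p'-f)T=0$. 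In either case $p,q$ are infinite and we are done.

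\emph{The Fredholm-type case.} There remains the case in which $q'-e$ and $p'-f$ are both finite, so that $e\sim f$ are infinite. I claim it then suffices to produce an infinite projection $e_1\le e$ commuting with $|T|$ and with $e-e_1$ infinite. Granting this, put $q=e_1\le e\le q'$ and $p=p'-we_1w^*$. Because $e_1$ commutes with $|T|$, the left support of $Te_1$ is $s\big((Te_1)(Te_1)^*\big)=w\,s(|T|e_1|T|)\,w^*=we_1w^*$, so $p(Te_1)=(p'-we_1w^*)Te_1=0$ and hence $pbq=pTe_1=0$. Finally $p\ge f-we_1w^*=w(e-e_1)w^*\sim e-e_1$, which is infinite, so $p$ is infinite, and $q=e_1$ is infinite.

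\emph{The crux.} The whole weight of the argument is thus in splitting the infinite support $e$ of $|T|$ into two infinite pieces along the spectrum of $|T|$. Whenever some level $\delta>0$ makes both $\chi_{[0,\delta)}(|T|)e$ and $\chi_{[\delta,\infty)}(|T|)$ infinite, these spectral projections give the splitting; and if $|T|$ is a nonzero scalar on an infinite spectral projection, halving that projection works, since $|T|$ commutes with everything there. The genuinely delicate case is when $|T|$ lies in the Breuer ideal $\K_{\Mul}$, i.e.\ every $\chi_{[\delta,\infty)}(|T|)$ with $\delta>0$ is finite while their supremum $e$ is infinite. Here I would decompose $e$ as a countable orthogonal sum of finite spectral slices of $|T|$ and regroup them into two subsums of infinite total size, using the trace on the finite part of $\Mul$ to split a divergent series of positive ``sizes'' into two divergent subseries. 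Verifying that the resulting $e_1$ is infinite -- rather than merely an infinite orthogonal sum of finite projections -- is the main obstacle, and is precisely what forces the use of proper infiniteness together with countable decomposability; the remaining steps are routine support-projection bookkeeping and Murray--von Neumann comparison.
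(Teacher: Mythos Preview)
Your reduction to a single $b$ and the polar-decomposition setup are correct, and the device of cutting down by an $e_1\le e$ commuting with $|T|$ does work once such an $e_1$ (with both $e_1$ and $e-e_1$ infinite) has been produced. The gap is in ``The crux'': your three cases are not exhaustive. In a type~$II_\infty$ factor, take $|T|$ whose trace--spectral measure is concentrated on $[0,1)$ with $\tau\big(\chi_{[0,\delta)}(|T|)\big)<\infty$ but $\tau\big(\chi_{[\delta,1)}(|T|)\big)=\infty$ for every $\delta\in(0,1)$ (for instance density $(1-t)^{-1}$). Then no single threshold $\delta$ makes both half-line pieces infinite, there is no infinite atom, and $|T|\notin\K_\Mul$ since $\chi_{[1/2,\infty)}(|T|)$ is infinite --- so none of your (a), (b), (c) applies. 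The repair is to abandon interval cuts altogether: the trace gives a $\sigma$-finite measure of infinite total mass on $\mathrm{sp}(|T|)$, and any such measure splits into two Borel pieces of infinite mass, whose spectral projections furnish the desired $e_1$ and $e-e_1$. (Incidentally, in your case (c) the ``main obstacle'' evaporates in a factor: there a projection is infinite iff its trace is infinite, so any subsum with divergent trace-series is automatically an infinite projection --- no further verification is needed.)

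The paper's proof sidesteps all of this spectral analysis. After disposing of the case where $p'$ minus the left support $p''$ of $p'bq'$ is infinite (your ``$p'-f$ infinite'' case), it simply halves the properly infinite projection $p''=p_1\oplus p_2$, lets $q_1$ be the right support of $p_1 b\, q'$, and takes $p=p_1$, $q=q'-q_1$. That $p_1 b\, q=0$ is immediate from the definition of $q_1$; that $q$ is infinite is read off from the Kaplansky parallelogram law $(p_1\vee p'_2)-p_1\sim p'_2-(p_1\wedge p'_2)$ applied with $p'_2$ the left support of $p' b\, q$. No commutation with $|T|$ and no measure-theoretic splitting are required, which is what makes the argument a two-line sketch rather than a case analysis.
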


\begin{proof}[Sketch of proof:]
By induction, it suffices to prove this for $n = 1$.

Let $p'' \in \Mul$ be the left support projection of $p' b_1 q'$.  Then $p'' \leq p'$.

If $p' - p''$ is an infinite projection, then take $p =_{df} p' - p''$ and $q =_{df} q'$.

Suppose that $p' - p''$ is not an infinite projection.  
Then $p''$ is infinite and hence, a
properly infinite projection.  Hence, let $p_1, p_2 \in \Mul$ be pairwise orthogonal projections
such that $p_1 \sim p_2 \sim p''$ and $p'' = p_1 + p_2$.   Let 
$q_1 \in \Mul$ be the right support
projection of $p_1 b_1 q'$.
Then $q_2 =_{df} q' - q_1$  is an infinite projection such that 
$p_1 b_1 q_2 = 0$.  So take $p =_{df} p_1$ and $q =_{df} q_2$.  
(Clearly, $p_1 b_1 q_2 = 0$.  
So it suffices to show that $q_2$ is an infinite projection.
Let $p'_2$ be the left support projection of $p' b_1 q_2$.
Then since $q_1 \oplus q_2 = q'$,
$p_1 \vee p'_2 = p'' = p_1 \oplus p_2$.
Hence, $p_1 \vee p'_2 - p_1 = p_2$ which is an infinite projection.
But, by \cite{TakesakiBook} Proposition V.1.6, 
$(p_1 \vee p'_2) - p_1 \sim p'_2 - (p_1 \wedge p'_2)$. 
Hence, $p'_2$ is an infinite projection.  Hence, $q_2$ is an infinite
projection.)   
\end{proof}

Before continuing, we recall some notation used, for example, in the study of the Cuntz semigroup
 (e.g., \cite{OrRorThi}). For $\epsilon
> 0$, let $(t - \epsilon)_+ : (-\infty, \infty) \rightarrow [0, \infty)$ be the function which is
given by
\[
(t - \epsilon)_+
=
\begin{cases}
t - \epsilon & \makebox{  if  } t \geq \epsilon \\
0   & \makebox{  otherwise  }
\end{cases}
\]
Let $\A$ be a C*-algebra.  For a self-adjoint element $a \in \A$ and for $\epsilon > 0$, let $(a -
\epsilon)_+ \in \A$ be the positive element gotten by applying $(t - \epsilon)_+$ and the
continuous functional calculus to $a$.  For a positive element $c \in \A$, let $Her(c) \subseteq
\A$ denote the hereditary C*-subalgebra generated by $c$ (i.e., the smallest hereditary
C*-subalgebra of $\A$ that contains $c$.)

\begin{lem}
Let $\Mul$ be a countably decomposable properly infinite von Neumann factor and let $\A$ be a
$C^*$-subalgebra of $\Mul$ with a unit. Let $p \in \K_{\Mul}$ be a finite projection. Suppose that
$\sigma : \A \rightarrow \M_n ( \mathbb{C})$ and $\eta : \M_n (\mathbb{C}) \rightarrow \Mul$ are
two completely positive maps with the following properties:
\begin{enumerate}
\item[(a)] If $\psi =_{df} \eta \circ \sigma$
then $\psi(1_{\A})$ is a projection
in $\Mul$
\item[(b)] $\sigma |_{\A \cap \K_{\Mul}} =
\psi |_{\A \cap \K_{\Mul}} = 0$
\end{enumerate}

Then $\psi$ can be approximated in the pointwise-norm topology
by maps of the form
\[
a \mapsto v^* a v
\]
where $v$ is a partial isometry in $\Mul$ such that
$$p v = 0$$
\label{lem:Preproperlyinfinitenullideal}
\end{lem}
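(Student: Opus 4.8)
The plan is to combine the pure-state decomposition of Lemma~\ref{lem:semidiscretemaps} with the excision Lemma~\ref{lem:excision}, so as to turn each pure state occurring in $\psi$ into an honest \emph{infinite} projection of $\Mul$, and then to assemble these pieces into a single partial isometry using proper infiniteness together with the orthogonalization of Lemma~\ref{lem:NullCrossTerms}. First, since $\sigma$ vanishes on $\A\cap\K_{\Mul}$ by (b), it factors through the quotient $\A/(\A\cap\K_{\Mul})$; applying Lemma~\ref{lem:semidiscretemaps} to the induced map and lifting back, I would write, for a prescribed finite self-adjoint set $\F\subseteq\A$ containing $1_\A$ and a prescribed tolerance,
$$\psi(a)\approx\sum_{l=1}^N\sum_{i,j}m_i^{(l)}\,\rho^{(l)}(a_{0i}^{(l)*}\,a\,a_{0j}^{(l)})\,m_j^{(l)*},$$
where each $\rho^{(l)}$ is a pure state on $\A$ that \emph{vanishes on} $\A\cap\K_{\Mul}$, the elements $a_{0i}^{(l)}$ lie in $\A$, and the $m_i^{(l)}$ lie in $\Mul$.

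Second, for each $l$ take the excision net $\{a_\gamma^{(l)}\}$ of Lemma~\ref{lem:excision} for $\rho^{(l)}$. Because $\rho^{(l)}$ kills $\A\cap\K_{\Mul}$, it extends (by Hahn--Banach, after first defining it on $\A+\K_{\Mul}$) to a state on $\Mul$ that vanishes on $\K_{\Mul}$ and takes the value $1$ on $a_\gamma^{(l)}$; hence $\mathrm{dist}(a_\gamma^{(l)},\K_{\Mul})=1$, so $a_\gamma^{(l)}$ has essential norm $1$ and the spectral projection $q^{(l)}:=\chi_{(1/2,1]}(a_\gamma^{(l)})$ is an \emph{infinite} projection of $\Mul$ at a spectral cutoff that does not degenerate. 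Conjugating the excision estimate by the bounded inverse of $a_\gamma^{(l)}$ on the range of $q^{(l)}$ gives $\|q^{(l)}bq^{(l)}-\rho^{(l)}(b)q^{(l)}\|\le 4\|a_\gamma^{(l)}(b-\rho^{(l)}(b))a_\gamma^{(l)}\|$, which is as small as desired for $b$ in any prescribed finite set and which passes to every subprojection of $q^{(l)}$.

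Third, since $\Mul$ is a properly infinite \emph{factor}, every infinite projection is equivalent to $1$, so I fix isometries $s^{(l)}\in\Mul$ with $s^{(l)*}s^{(l)}=1$ and $s^{(l)}s^{(l)*}=q^{(l)}$, set $V^{(l)}:=\sum_j a_{0j}^{(l)}s^{(l)}m_j^{(l)*}$, and $V:=\sum_l V^{(l)}$. Inserting $s^{(l)}s^{(l)*}=q^{(l)}$ and using the excision estimate $q^{(l)}a_{0i}^{(l)*}a\,a_{0j}^{(l)}q^{(l)}\approx\rho^{(l)}(a_{0i}^{(l)*}a\,a_{0j}^{(l)})q^{(l)}$ together with $s^{(l)*}q^{(l)}s^{(l)}=1$ shows that $(V^{(l)})^*aV^{(l)}$ approximates the $l$-th summand above. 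Before forming $V$ I shrink the $q^{(l)}$ to smaller infinite subprojections to arrange two things: the cross terms vanish, i.e.\ $q^{(l)}a_{0i}^{(l)*}a\,a_{0j}^{(l')}q^{(l')}=0$ for $l\neq l'$ and $a\in\F$ (this is exactly Lemma~\ref{lem:NullCrossTerms}, applied to each of the finitely many pairs), and $p\,a_{0j}^{(l)}q^{(l)}=0$ for all $l,j$ (intersect each $q^{(l)}$ with $1-r^{(l)}$, where $r^{(l)}$ is the join of the left supports of the $a_{0j}^{(l)*}p$; as $p$ is finite, $r^{(l)}$ is finite, so the intersection stays infinite). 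Both operations preserve infiniteness and the excision estimate. Then $V^*aV\approx\sum_l(V^{(l)})^*aV^{(l)}\approx\psi(a)$ on $\F$, and in particular $V^*V\approx\psi(1)$, which is a projection by (a), while $pV=0$.

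Finally, since $V^*V$ is approximately a projection, the partial isometry $v$ in the polar decomposition of $V$ satisfies $v^*av\approx V^*aV\approx\psi(a)$ on $\F$, and $pV=0$ forces $pv=0$ because $v$ and $V$ have the same range projection; this yields the required approximation. The main obstacle is the second step: realizing each pure state by a genuine \emph{infinite} projection whose spectral cutoff stays away from $0$. This is exactly where hypothesis (b) is indispensable, for without $\rho^{(l)}(\A\cap\K_{\Mul})=0$ the excision projection could be finite, or its infinite part could live at spectral values tending to $0$ (destroying the error estimate), and both the equivalence $q^{(l)}\sim 1$ and the requirement $pv=0$ would fail. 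The remaining steps---choosing the excision index far enough out, the simultaneous finite shrinking in Step three, and the polar-decomposition perturbation---are routine.
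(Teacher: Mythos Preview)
Your proof is correct and follows the same architecture as the paper's: decompose $\psi$ via Lemma~\ref{lem:semidiscretemaps} into pure-state pieces that vanish on $\A\cap\K_{\Mul}$, use excision (Lemma~\ref{lem:excision}) together with the fact that each pure state kills $\K_{\Mul}$ to replace each piece by compression to an \emph{infinite} projection, orthogonalize the pieces with Lemma~\ref{lem:NullCrossTerms}, assemble into a single element whose square is close to the projection $\psi(1_{\A})$, and finish by polar decomposition. The paper does exactly this, working with the excision element $c_i$ itself and an $x_i$ satisfying $x_i c_i^2 x_i^*=1_{\Mul}$ rather than with your spectral cutoff $q^{(l)}=\chi_{(1/2,1]}(a_\gamma^{(l)})$ and isometry $s^{(l)}$, but the two formulations are interchangeable.

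The one substantive difference is how $pv=0$ is arranged. You shrink each $q^{(l)}$ below $1-r^{(l)}$, where $r^{(l)}$ is the (finite) join of the left supports of the $a_{0j}^{(l)*}p$, and then observe that $pV=0$ forces $pv=0$ via the shared range projection. The paper instead enlarges the test set $\F$ to contain $ap,\,pa,\,pap$ for every $a\in\F$, notes that $\psi$ kills all of these because $p\in\K_{\Mul}$, deduces that $x^*pax$, $x^*apx$, $x^*papx$ are automatically small, and then simply replaces $x$ by $(1-p)x$. The paper's trick is slightly shorter and avoids the projection-lattice computation; yours is more geometric and makes the role of the finiteness of $p$ explicit. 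Either way the argument goes through.
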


\begin{proof}
   If $\A \subset \K_{\Mul}$ then we can take $v = 0$.  Hence,
we may assume that $\A$ is not a subset of $\K_{\Mul}$.   

    First, let $\epsilon > 0$ be given and
let $\F$ be a finite subset of $\A$.
We may assume that $1_{\A} \in \F$.
Also, we may assume that $p \in \A$ and
$ap, pa, pap \in \F$ for all $a \in \F$.
We will approximate $\psi$ on $\F$ in the norm topology.
For simplicity,
we may assume that the elements of $\F$ all have
norm less than or equal to one.
Let $\delta > 0$ be arbitrary.

By Lemma \ref{lem:semidiscretemaps},
let
$\rho^1, \rho^2,..., \rho^k$ be a finite set of pure states on
$\A$ with $\rho^i |_{\A \cap \K_{\Mul}} = 0$ for $1 \leq i \leq k$, let
$m_1, m_2, ..., m_k$ be a set of row matrices in
$\Mul^n$ and let
$a_1, ..., a_k$ be a set of row matrices in $\A^n$ such that
on $\F$,
$\psi$ is within $\delta$ of the map
\[
a \mapsto \sum_{i = 1}^k m_i {\rho^{i}}_n ( a_i^* a a_i ) m_i^*.
\]

By Lemma \ref{lem:excision}, for each
$i$, let $c_i$ be a positive element of $\A$ with $\| c_i \| = 1$ and
$\rho^i(c_i) = 1$  such
that
$$diag(c_i, c_i, ..., c_i) {\rho^i}_n
(a_i^*a a_i) diag(c_i, c_i, ..., c_i)$$
is within $\frac{\delta}{2k ( \| m_i \|^2 + 1)}$ of
$$diag(c_i, c_i, ..., c_i) a_i^* a a_i diag(c_i, c_i, ..., c_i)$$
for
every $a \in \F$.  Here,
$diag(c_i, c_i, ..., c_i)$ is the element of
$\M_n( \A )$ with $c_i$'s in the diagonal and zeroes everywhere else.

    Note that for $1 \leq i \leq k$, since $\rho^i |_{\A \cap \K_{\Mul}} =0$
and $\rho^i(c_i) = 1$, $c_i$ is a full element of $\Mul$.
Hence, since $\Mul$ is a properly infinite factor,
for each $i$, let
$x_i$ be an element of $\Mul$ with norm less than $5/4$ such that
for every $a \in \F$,
\begin{enumerate}
\item[(a)] $x_i c_i^2 x_i^* = 1_{\Mul}$ for each $i$, and
\item[(b)] $x_i c_i a_i^* a a_j c_j x_j^* = 0$ for
$i \neq j$.
\end{enumerate}
(Indeed, since $\| c_i \| = \rho^i(c_i) = 1$,  for small enough $\epsilon>0$, the element $(c_i-1+\epsilon)_+$ is not compact and hence is full. Therefore, $Her( (c_i -1 + \epsilon)_+)$ contains an infinite projection, say, $p'_i \in \mathcal M$  for each $1 \leq i \leq n$. Then, by repeatedly applying Lemma
\ref{lem:NullCrossTerms}, one gets infinite subprojections $p_i \leq p'_i$ in $\mathcal M$ such that $p_i
c_i a_i^* a a_j c_j p_j^* = 0$ for $i \neq j$.  Note that $p_i \sim 1_{\mathcal M}$ ($1 \leq i \leq n$).
Then take $x_i =_{df} y_i p_i$ for appropriate $y_i \in \mathcal M$.  Since $p_i \in Her( (c_i -1 +
\epsilon)_+)$, $y_i$ can be chosen so that $\| x_i \| < 5/4$ ($1 \leq
i \leq n$).)


    Take $x^* =_{df} \sum_{i=1}^k m_i x_i c_i a_i^*$.
Then on $\F$, $\psi$ is within $2 \delta$ of the map $a \mapsto x^* a x$. Since $ap, pa, pap \in
\F$ for all $a \in \F$, and since $\psi(ap) = \psi(pa) = \psi(pap) = 0$ (since $p \in \K_{\Mul}$)
for all $a \in \F$, we have that
$$\| x^* pa x \|, \| x^* ap x \|, \| x^* pap x \| < 2\delta$$
for all $a \in \F$.
Now take $y =_{df} (1 - p) x \in \Mul$.
Then for all $a \in \F$,
$$\| y^* a y - \psi(a) \|
\leq  \| x^* a x - \psi(a) \| + \| x^* a p x \| + \| x^* p a x \|
+ \| x^* p a p x \| < 8 \delta.$$
Also, by hypothesis, $\psi(1_{\A})$ is a projection and
$\| y^* 1_{\A} y - \psi(1_{\A}) \| < \delta$.

Hence, since $\delta$ was arbitrary, if we chose $\delta$ to be
small enough then we can find a partial isometry
$v \in \Mul$ such
that $pv = 0$ and on $\F$, $\psi$ is within $\epsilon$ of the map
$$a \mapsto v^* a v,$$
as required.
\end{proof}

\begin{cor} Let $\Mul$ be a countably decomposable properly infinite
von Neumann factor
and let $\A$ be a  C*-subalgebra of $\Mul$.
Let $p \in \K_{\Mul}$ be a finite projection, and suppose that
$\psi : \A \rightarrow \Mul$ is a *-homomorphism such that
$$\psi |_{\A \cap \K_{\Mul}} = 0$$

Then if $\A$ is nuclear (resp. $\Mul$ is semidiscrete) then $\psi$ can be approximated in the
pointwise-norm (resp. pointwise-$\sigma$-strong*) topology by maps of the form $a \mapsto v^* a v$
where $v$ is a partial isometry in $\Mul$
such that $pv = 0$. \label{cor:properlyinfinitenullideal}
\end{cor}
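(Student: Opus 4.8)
The plan is to deduce the Corollary from Lemma~\ref{lem:Preproperlyinfinitenullideal} by exhibiting the $*$-homomorphism $\psi$ as a limit of completely positive maps that factor through matrix algebras and satisfy hypotheses (a) and (b) of that lemma --- the limit being in the point-norm topology when $\A$ is nuclear and in the point-$\sigma$-strong$^{*}$ topology when $\Mul$ is semidiscrete. Since the point-norm (resp.\ point-$\sigma$-strong$^{*}$) closure $\overline{S}$ of the set $S$ of maps $a \mapsto v^{*}av$ (with $v$ a partial isometry and $pv=0$) is closed, and since Lemma~\ref{lem:Preproperlyinfinitenullideal} puts each such factored map into $\overline{S}$, it will follow that $\psi \in \overline{S}$, as desired. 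We may assume $\A$ is unital. Put $\mathcal{J} := \A \cap \K_{\Mul}$; this is a closed two-sided ideal of $\A$, and because $\psi|_{\mathcal{J}} = 0$ the map factors as $\psi = \overline{\psi} \circ \pi$, where $\pi : \A \to \A/\mathcal{J}$ is the quotient map and $\overline{\psi} : \A/\mathcal{J} \hookrightarrow \Mul$ is an injective $*$-homomorphism. Note that $P := \psi(1_{\A}) = \overline{\psi}(1_{\A/\mathcal{J}})$ is a projection.

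\textbf{Nuclear case.} First I would use that quotients of nuclear C$^{*}$-algebras are nuclear, so $\A/\mathcal{J}$ is (unital) nuclear and admits a net of unital completely positive maps $\sigma_{\lambda} : \A/\mathcal{J} \to \M_{n_{\lambda}}(\mathbb{C})$ and $\eta_{\lambda} : \M_{n_{\lambda}}(\mathbb{C}) \to \A/\mathcal{J}$ with $\eta_{\lambda} \circ \sigma_{\lambda} \to \mathrm{id}$ point-norm. Set
\[
\Psi_{\lambda} := \overline{\psi} \circ \eta_{\lambda} \circ \sigma_{\lambda} \circ \pi,
\]
which factors as the $\M_{n_{\lambda}}(\mathbb{C})$-valued completely positive map $\sigma_{\lambda}\circ\pi$ followed by the completely positive map $\overline{\psi}\circ\eta_{\lambda} : \M_{n_{\lambda}}(\mathbb{C}) \to \Mul$. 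Since $\overline{\psi}$ is contractive, $\Psi_{\lambda} \to \psi$ point-norm. Hypothesis (b) holds because $\sigma_{\lambda}\circ\pi$ and $\Psi_{\lambda}$ both annihilate $\mathcal{J}$ (as $\pi|_{\mathcal{J}} = 0$), and hypothesis (a) holds because, $\eta_{\lambda}$ being unital and $\overline{\psi}$ a $*$-homomorphism, $\Psi_{\lambda}(1_{\A}) = \overline{\psi}(1_{\A/\mathcal{J}}) = P$ is a projection. Lemma~\ref{lem:Preproperlyinfinitenullideal} then places each $\Psi_{\lambda}$ in the point-norm closure of $S$, and letting $\lambda$ run gives $\psi \in \overline{S}$.

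\textbf{Semidiscrete case.} Here the ideal $\mathcal{J}$ is dealt with by applying $\psi$ first, and the obstruction is keeping hypothesis (a). The device I would use is to pass to the corner $\Mul_{P} := P\Mul P$, which is injective --- hence semidiscrete --- since $x \mapsto PxP$ is a normal conditional expectation of the injective algebra $\Mul$ onto it, and through which $\psi$ factors unitally as $\psi : \A \to \Mul_{P}$. Semidiscreteness of $\Mul_{P}$ supplies a net of \emph{unital} normal completely positive maps $\tau_{\lambda} : \Mul_{P} \to \M_{n_{\lambda}}(\mathbb{C})$ and $\theta_{\lambda} : \M_{n_{\lambda}}(\mathbb{C}) \to \Mul_{P}$ with $\theta_{\lambda}\circ\tau_{\lambda} \to \mathrm{id}_{\Mul_{P}}$ point-$\sigma$-weakly. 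Set $\Psi_{\lambda} := \theta_{\lambda} \circ \tau_{\lambda} \circ \psi$, factored through $\M_{n_{\lambda}}(\mathbb{C})$ via $\tau_{\lambda}\circ\psi$ and $\theta_{\lambda}$. Hypothesis (b) holds since $\psi(\mathcal{J}) = 0$, and hypothesis (a) holds because $\theta_{\lambda}\circ\tau_{\lambda}$ is unital on $\Mul_{P}$, so $\Psi_{\lambda}(1_{\A}) = \theta_{\lambda}\tau_{\lambda}(P) = P$ is a projection. Lemma~\ref{lem:Preproperlyinfinitenullideal} puts each $\Psi_{\lambda}$ in the point-norm closure, hence in the point-$\sigma$-strong$^{*}$ closure, of $S$.

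To finish the semidiscrete case I would upgrade the convergence $\Psi_{\lambda} \to \psi$ from $\sigma$-weak to $\sigma$-strong$^{*}$: since $\Phi_{\lambda} := \theta_{\lambda}\circ\tau_{\lambda}$ are unital (so $2$-positive) completely positive maps converging point-$\sigma$-weakly to the identity, the Kadison--Schwarz inequality $\Phi_{\lambda}(x)^{*}\Phi_{\lambda}(x) \le \Phi_{\lambda}(x^{*}x)$ forces $\|\Phi_{\lambda}(x) - x\|^{\sharp}_{\varphi} \to 0$ for every normal state $\varphi$; evaluating at $x = \psi(a)$ gives $\Psi_{\lambda}(a) \to \psi(a)$ in $\sigma$-strong$^{*}$, so $\psi$ lies in the (closed) point-$\sigma$-strong$^{*}$ closure of $S$. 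The main obstacle, in both cases, is precisely hypothesis (a): completely positive matrix approximations of an identity do not preserve projections, so one cannot simply feed $\theta_{\lambda}\tau_{\lambda}\circ\psi$ (or $\eta_{\lambda}\sigma_{\lambda}$) into the lemma. The two remedies above --- re-entering the C$^{*}$-algebra through the genuine $*$-homomorphism $\overline{\psi}$ in the nuclear case, and passing to the corner $\Mul_{P}$ together with a \emph{unital} approximation of its identity in the semidiscrete case --- are what make hypothesis (a) hold on the nose, and the Kadison--Schwarz upgrade, together with the fact that point-norm convergence implies point-$\sigma$-strong$^{*}$ convergence, is what lets the two levels of approximation be combined.
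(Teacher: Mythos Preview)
Your approach is correct and is essentially the same as the paper's, only far more explicit. The paper's proof consists of two things: (i) a careful reduction to the case where $\A$ is unital---adjoining the strong-closure unit $1_{\overline{\A}^{\mathrm{strong}}}$, extending $\psi$ by $\tilde\psi(1_{\overline{\A}^{\mathrm{strong}}})=1_{\overline{\psi(\A)}^{\mathrm{strong}}}$, and checking that the extended map still annihilates $(\A+\mathbb{C}1_{\overline{\A}^{\mathrm{strong}}})\cap\K_\Mul$---and (ii) the single sentence ``the corollary then follows from Lemma~\ref{lem:Preproperlyinfinitenullideal} and the definitions of nuclearity and semidiscreteness.'' You skip (i) by fiat and expand (ii) in full: your passage to the quotient $\A/\mathcal J$ in the nuclear case (so that the first factor $\sigma_\lambda\circ\pi$ genuinely vanishes on $\mathcal J$, as hypothesis~(b) demands) and to the corner $P\Mul P$ with \emph{unital} semidiscrete approximations in the injective case (so that $\Psi_\lambda(1_\A)=P$ is a projection on the nose, as hypothesis~(a) demands), together with the Kadison--Schwarz upgrade from point-$\sigma$-weak to point-$\sigma$-strong$^*$ convergence, are precisely the details the paper suppresses. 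The only place your write-up is thinner than the paper's is the unital reduction itself: the verification that unitizing does not introduce new elements of $\K_\Mul$ outside $\ker\tilde\psi$ is not entirely trivial, and the paper devotes a short paragraph to it.
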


\begin{proof}

   We may assume that $\A$ is unital.  For otherwise, we can replace
$\A$ with $\A + \mathbb{C} 1_{\overline{\A}^{strong}}$ and extend $\psi$ to $\tilde{\psi}$ where
$\tilde{\psi} |_{\A} = \psi$ and $\tilde{\psi}(1_{\overline{\A}^{strong}}) =
1_{\overline{\psi(\A)}^{strong}}$. One can check that $\tilde{\psi}( (\A +
\mathbb{C}1_{\overline{\A}^{strong}}) \cap \K_{\Mul}) = 0$. (Indeed, say that $a \in \A$ is a
self-adjoint element with $\| a \| \leq 1$ such that $1_{\overline{\A}^{strong}}  - a \in
\K_{\Mul}$. Choose an approximate unit $\{ b_{\alpha} \}$ for $\A$ such that $|a| \leq b_{\alpha}$
for all $\alpha$. Then $b_{\alpha} - a \rightarrow 1_{\overline{\A}^{strong}} - a$ and
$\psi(b_{\alpha} - a) \rightarrow 1_{\overline{\psi(\A)}^{strong}} - \psi(a)$ in the strong
topology. But $0 \leq b_{\alpha} - a \leq 1_{\overline{\A}^{strong}} - a \in \K_{\Mul}$ for all
$\alpha$. Hence, $\psi(b_{\alpha} - a) = 0$ for all $\alpha$. Hence,
$\tilde{\psi}(1_{\overline{\A}^{strong}} - a) = 1_{\overline{\psi(\A)}^{strong}} - \psi(a) = 0$.)

The corollary then follows from Lemma \ref{lem:Preproperlyinfinitenullideal} and the definitions
of nuclearity and semidiscreteness.
\end{proof}

The following notion of rank 
was introduced by Hadwin (\cite{HadwinTAMS}).

\begin{df}  Let $\A$ be a C*-algebra and let $\Mul$ be a von Neumann algebra.
Let $\phi, \psi : \A \rightarrow \Mul$ be two *-homomorphisms.

  Then we say that $\phi$ and $\psi$ have the same \emph{W*-rank} (and
write ``$\textrm{W*-rank}(\phi) = \textrm{W*-rank}(\psi)$") if for every positive element $a \in \A$, the support
projections of $\phi(a)$ and $\psi(a)$ are Murray-von Neumann equivalent in $\Mul$.
\label{df:rank}
\end{df}

With notation as in Definition \ref{df:rank},  by \cite{BlackadarGeneralBook} Theorem III.2.5.7,
we have that in the case of a finite von Neumann algebra $\Mul$,  $\textrm{W*-rank}(\phi) = \textrm{W*-rank}(\psi)$
if and only if $\tau \circ \phi= \tau \circ \psi$ for every normal tracial state $\tau$ on $\Mul$
if and only if $T \circ \phi = T \circ \psi$ where $T$ is the unique centre-valued trace on
$\Mul$. (Compare with Proposition \ref{prop:II_1uniqueness}.)

For the convenience of the reader, we recall some notation (introduced in the introduction). For a
von Neumann algebra $\Mul$ and a normal linear functional $\rho \in \Mul_*$, recall that $\| .
\|_{w*, \rho}$ is the seminorm on $\Mul$ given by $\| x \|_{w*, \rho} =_{df} |\rho(x)|$ for all $x
\in \Mul$.

\begin{lem}  Let $\Mul$ be a countably decomposable
properly infinite von Neumann factor and
let $\A$ be a C*-algebra.
Suppose that either $\A$ is nuclear or $\Mul$ is semidiscrete.

Then if
$$\phi, \psi : \A \rightarrow \Mul$$
are two *-homomorphisms such that
$$\textrm{W*-rank}(\phi) = \textrm{W*-rank}(\psi)$$

then there exists a net $\{ v_{\alpha} \}$ of partial isometries in $\Mul$ such that for all $a
\in \A$,

$$v_{\alpha}^* \psi(a) v_{\alpha} \rightarrow \phi(a)$$
in the weak* topology. \label{lem:ProperlyInfiniteFactorCodomain}
\end{lem}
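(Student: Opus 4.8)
The plan is to split the problem along the Breuer ideal $\K_{\Mul}$ into a ``compact'' part and a ``transverse'' part (seen modulo $\K_{\Mul}$), to solve each with a tool already in hand --- Proposition~\ref{prop:II_1uniqueness} for the compact part and Corollary~\ref{cor:properlyinfinitenullideal} for the transverse part --- and then to glue the two partial isometries using the orthogonality clause $pv=0$ of Corollary~\ref{cor:properlyinfinitenullideal}. I first make the standard reductions. Since the support projection of a positive element vanishes exactly when the element does, and since $0$ is Murray--von Neumann equivalent only to $0$, the hypothesis $\textrm{W*-rank}(\phi)=\textrm{W*-rank}(\psi)$ forces $\ker\phi=\ker\psi$; so I may assume both maps injective and identify $\A$ with $\psi(\A)\subseteq\Mul$, so that the task becomes to approximate $\theta=\phi\circ\psi^{-1}$ by compressions $b\mapsto v^*bv$. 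It suffices to produce, for each finite $\F\subseteq\A$, each finite family $\rho_1,\dots,\rho_m\in\Mul_*$, and each $\epsilon>0$, a partial isometry $v$ with $\|v^*\psi(a)v-\phi(a)\|_{w^*,\rho_j}<\epsilon$ for all $a\in\F$, $j$; in fact, since the lemmas below give convergence in the stronger pointwise-$\sigma$-strong* topology, I will produce $\sigma$-strong* approximations and pass down to weak*.

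Next I separate the Breuer ideal. Using that after a cut-down $(\,\cdot\,-\delta)_+$ a positive element of $\K_{\Mul}$ has a finite support projection, and that a finite projection lies in $\K_{\Mul}$, the equality of W*-ranks shows that $\mathcal{J}:=\phi^{-1}(\K_{\Mul})=\psi^{-1}(\K_{\Mul})$ is a common closed ideal with $\phi(\mathcal{J}),\psi(\mathcal{J})\subseteq\K_{\Mul}$ and with Murray--von Neumann equivalent support projections. I would then fix a quasicentral approximate unit $\{e_\lambda\}\subseteq\mathcal{J}^+$ for $\mathcal{J}$ in $\A$ and use it to split, on $\F$, each of $\phi$ and $\psi$ into an approximately orthogonal sum of a \emph{corner} piece $\phi(e_\lambda)^{1/2}\phi(\cdot)\phi(e_\lambda)^{1/2}$ (which lands under a finite projection after a cut-down) and a \emph{transverse} piece $\phi(1-e_\lambda)^{1/2}\phi(\cdot)\phi(1-e_\lambda)^{1/2}$, the cross terms being controlled by quasicentrality of $\{e_\lambda\}$.

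For the transverse piece, the relevant map annihilates $\A\cap\K_{\Mul}$, so Corollary~\ref{cor:properlyinfinitenullideal} applies and yields a partial isometry $v_1$ with $pv_1=0$, where $p$ is a finite projection chosen to dominate all of the corner data of both $\phi$ and $\psi$ on $\F$, realizing the transverse piece as $v_1^*\psi(\cdot)v_1$. For the corner piece I would apply Proposition~\ref{prop:II_1uniqueness} to the restrictions $\phi|_{\mathcal J},\psi|_{\mathcal J}$: inside the finite, countably decomposable corner $p\Mul p$ --- which is semidiscrete when $\Mul$ is, while the domain ideal $\mathcal{J}$ is nuclear when $\A$ is --- equality of W*-rank is equality of traces (as recorded after Definition~\ref{df:rank}), so Proposition~\ref{prop:II_1uniqueness} produces a unitary $v_0\in p\Mul p$ (a partial isometry $v_0=pv_0p$ in $\Mul$) conjugating one corner piece to the other. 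Setting $v=v_0+v_1$ and using $pv_1=0$ together with $v_0=pv_0p$ to see that the mixed terms $v_0^*\psi(a)v_1$ and $v_1^*\psi(a)v_0$ are negligible, I obtain $v^*\psi(a)v\approx\phi(a)$ on $\F$; letting $\F$, the functionals, and $\epsilon$ range over the evident directed set then produces the net.

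I expect the gluing to be the main obstacle. I must arrange that $v_0$ (supported under the finite projection $p$) and $v_1$ (with $pv_1=0$) combine into an honest partial isometry, and that every cross term $v_0^*\psi(a)v_1$, $v_1^*\psi(a)v_0$ tends to $0$. This is exactly where the orthogonality clause $pv_1=0$ of Corollary~\ref{cor:properlyinfinitenullideal} is indispensable, where I anticipate invoking Lemma~\ref{lem:NullCrossTerms} to annihilate residual cross terms, and where the quasicentrality of $\{e_\lambda\}$ is used to make the corner/transverse splitting asymptotically orthogonal. The type II$_{\infty}$ case, in which the Breuer ideal carries a trace-valued rather than integer-valued rank, is the delicate one: keeping all the corner data of both homomorphisms inside a single finite projection $p$ forces careful $(\,\cdot\,-\delta)_+$ bookkeeping, and it is this compatibility between the finite-corner application of Proposition~\ref{prop:II_1uniqueness} and the transverse application of Corollary~\ref{cor:properlyinfinitenullideal} that carries the weight of the argument.
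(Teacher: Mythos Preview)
Your proposal is correct and follows essentially the same strategy as the paper: reduce to the case where $\psi$ is the inclusion, split along the Breuer ideal using a quasicentral approximate unit, handle the finite corner via Proposition~\ref{prop:II_1uniqueness} and the transverse part via Corollary~\ref{cor:properlyinfinitenullideal}, and glue the resulting partial isometries using the orthogonality clause $pv=0$. The paper makes the type reduction explicit (type~I via \cite{HadwinTAMS}, type~III directly from Corollary~\ref{cor:properlyinfinitenullideal} since $\K_{\Mul}=0$, leaving only type~$II_\infty$) and takes the domain for the finite step to be the hereditary subalgebra $e\A e$ rather than the ideal $\mathcal J$, with the partial isometry $w$ going from $p$ to $\phi(p)$ rather than a unitary in a single finite corner, but these are implementation details within the same architecture.
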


\begin{proof}

   By \cite{HadwinTAMS}, we may assume that $\Mul$ is a continuous properly infinite factor.
If $\Mul$ is a type $III$ factor then this follows from Corollary
\ref{cor:properlyinfinitenullideal}. Hence, we may assume that $\Mul$ is a type $II_{\infty}$
factor.

   That $\textrm{W*-rank}(\phi) = \textrm{W*-rank}(\psi)$ implies that $ker(\phi) = ker(\psi)$.
Hence, replacing $\A$ with $\A / ker(\psi)$ if necessary, we may assume that $\phi$ and $\psi$ are
injective.  We may further assume that $\A$ is a C*-subalgebra of $\Mul$ and $\psi : \A
\rightarrow \Mul$ is the natural inclusion map.

   Let $\epsilon > 0$ be given and let $\G \subset \Mul_*$ be a finite
collection of normal states.  Let $\F \subset \A$ be a finite set
of elements.  We may assume that the elements of $\F$ have norm less
than or equal to one.

    Let $\delta >0$ be arbitrary.

   Let $P, Q \in \Mul$ be the projections that are given by
 $P =_{df} 1_{\overline{\A \cap \K_{\Mul}}^{strong}}$
 and $Q =_{df} 1_{\overline{\phi(\A) \cap \K_{\Mul}}^{strong}}$ respectively. $P$ is the strong limit of an
approximate unit for $\A \cap \K_{\Mul}$ which quasicentralizes $\A$ (see \cite{ArvesonDuke}
Theorem 1). Hence, choose a positive element $e \in \A \cap \K_{\Mul}$ with norm one such that the
following statements are true:
\begin{equation} \label{equ:almostorthogonalsummand} \end{equation}
\begin{enumerate}
\item[i.]  If $p$ is the support projection of $e$ then
$p \in \K_{\Mul}$.
\item[ii.] The elements $e a, ae, eae$ are all within $\delta$ of each other,
for all $a \in \F$.
\item[iii.]  There is a projection $r \in \K_{\Mul}$ with
$r \leq e$ such that $|\rho(P - r)|, |\rho(Q - \phi(r))| < \delta$, for
all $\rho \in \G$.
\end{enumerate}

Since $Q$ is the (strong) limit of an approximate unit for $\phi(\A) \cap \K_{\Mul}$ that
quasicentralizes $\phi(\A)$, we have that $Q \phi(a) = \phi(a) Q$ for every $a \in \A$.
Hence, we
have a *-homomorphism
\[
\A \rightarrow \Mul : a \mapsto (1 - Q) \phi(a) (1 - Q).
\]
Since $\textrm{W*-rank}(\phi) = \textrm{W*-rank}(\psi)$, $\phi(\A \cap \K_{\Mul}) = \phi(\A) \cap \K_{\Mul}$.
(Indeed, for every positive $a \in \A$, $a \in \K_{\Mul}$ if and only if $(a - \mu)_+ \in
\K_{\Mul}$ for all $\mu > 0$ if and only if the support projection of $(a - \mu)_+$ is in
$\K_{\Mul}$ for all $\mu > 0$ if and only if the support projection of $(\phi(a) - \mu)_+ =
\phi((a - \mu)_+)$ is in $\K_{\Mul}$ for all $\mu
> 0$ (since $\textrm{W*-rank}(\phi) = \textrm{W*-rank}(\psi)$) if and only if $(\phi(a) - \mu)_+ \in \K_{\Mul}$ for all
$\mu > 0$ if and only if $\phi(a) \in \K_{\Mul}$.)  Hence, the above map annihilates $\A \cap
\K_{\Mul}$. Therefore, by Corollary \ref{cor:properlyinfinitenullideal}, let $v \in \Mul$ be a
partial isometry such that $v v^* = 1 -Q$, $vp = 0$ and
\begin{equation}
\| v a v^* - (1 - Q) \phi(a) (1 - Q)  \|_{w*, \rho}< \delta \label{equ:1-Qpiece}
\end{equation}
for all $a \in \F$ and all $\rho \in \G$.\\

    Both $p \Mul p$ and $\phi(p) \Mul \phi(p)$ are
type $II_1$ factors.  Hence, by
Proposition \ref{prop:II_1uniqueness} (taking $e \A e$ as the domain
algebra and $\phi |_{e \A e}$, $\psi |_{e \A e}$ as the maps),
there exists a partial isometry $w$ with
$p = w^* w$ and $\phi(p) = w w^*$ such that
\begin{equation}
\| w e a e w^* - \phi(e a e) \|_{w*, \rho} < \delta \label{equ:Qpiece}
\end{equation}
for all $a \in \F$ and all $\rho \in \G$.

   From (\ref{equ:almostorthogonalsummand}),  (\ref{equ:1-Qpiece}) and
(\ref{equ:Qpiece}), for all $a \in \F$ and all $\rho \in \G$,
\begin{eqnarray*}
 & & \|(v + w) a (v + w) - \phi(a) \|_{w*, \rho} \\
& < & 4 \delta + \|(v + w)(e ae + (1 - e) a (1 - e)) (v^* + w^*) - \phi(e a e + (1 - e) a (1
- e)) \|_{w*, \rho} \\
& < & 8 \delta  + \| w e ae w^* - \phi(eae) \|_{w*, \rho}
+ \| v a v^* - (1 - Q) \phi(a) (1 - Q) \|_{w*, \rho} \\
& < & 10 \delta.
\end{eqnarray*}

   Since $\delta$ is arbitrary, if we chose $\delta = \epsilon/10$ then
we would have that $\phi$ is $\epsilon$-approximately inner over $\F$
and with respect to $\G$.
\end{proof}

The above lemma generalizes the results of \cite{Voiculescu}, \cite{HadwinTAMS} and
\cite{ArvesonDuke} which proved the case of type I codomains.  In fact, in these papers, the
convergence (for the approximate unitary equivalence)is stronger (in the norm topology).

The next result seems standard, but we did not find an exact reference.

\begin{lem}  Let $\Mul$ be a countably decomposable properly infinite von Neumann algebra
and let $\F \subset \Mul$ be a finite subset.
Let $\G \subset \Mul_*$ be a finite collection of normal states.
Then for every partial isometry $v \in \Mul$, for every
$\epsilon > 0$, there exists a unitary $u \in \Mul$ such that
\[
\|  v a v^* - u a u^* \|_{w*, \rho} < \epsilon
\]
for all $a \in \F$ and for all $\rho \in \G$.
\label{lem:partialisometrytounitary}
\end{lem}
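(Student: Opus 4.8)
The plan is to build a unitary $u$ that coincides with $v$ on a subprojection of $v^*v$ carrying almost all of the mass of the finitely many states in $\G$, while the ``defect'' of $v$ is routed into an infinite summand of small $\G$-mass; proper infiniteness of $\Mul$ is exactly what makes such a routing possible. Write $p =_{df} v^*v$ and $q =_{df} vv^*$, so that $v = vp$, $p \sim q$ via $v$, and $vav^* = v(pap)v^*$ for every $a \in \Mul$. Put $\rho_0 =_{df} \frac{1}{|\G|}\sum_{\rho\in\G}\rho$; this is a normal state, every $\rho\in\G$ is dominated by $|\G|\,\rho_0$, and its support is countably decomposable, so it suffices to make each error term small in $\rho_0$. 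We may assume $v$ is not already a unitary; absorbing the range defect into the construction below, the essential case is $q = 1_\Mul$, so that $p \sim q = 1_\Mul$ is properly infinite.

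The first step is to shrink $p$ slightly. Since $p \sim 1_\Mul$ is properly infinite in a countably decomposable algebra, it decomposes as an orthogonal sum $p = \sum_n p_n$ of subprojections each equivalent to $1_\Mul$. The numbers $\rho_0(v p_n v^*)$ sum to $\rho_0(q) \le 1$, hence tend to $0$, so for any prescribed $\delta>0$ one of them, call it $r'$, satisfies $r' \sim 1_\Mul$ and $\rho_0(vr'v^*) < \delta$. Set $r =_{df} p - r' \le p$. Then $vrv^* = q - vr'v^* = 1_\Mul - vr'v^*$, so $1 - vrv^* = vr'v^* \sim r' \sim 1_\Mul$, while on the initial side $1 - r = (1-p) + r' \succeq r' \sim 1_\Mul$. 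By the comparison theorem together with the Cantor--Schröder--Bernstein argument for von Neumann algebras, both $1-r$ and $1-vrv^*$ are equivalent to $1_\Mul$, hence to each other; choose a partial isometry $w$ with $w^*w = 1-r$ and $ww^* = 1 - vrv^* = vr'v^*$.

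Now put $u =_{df} vr + w$. Its two summands have orthogonal initial projections $r$, $1-r$ and orthogonal final projections $vrv^*$, $1-vrv^*$, so $u^*u = uu^* = 1_\Mul$ and $u$ is a unitary with $ur = vr$ and $u(1-r) = w$. Expanding $uau^*$ into the four products coming from $u = vr + w$ and subtracting $vav^* = v(pap)v^*$, every term except the diagonal one $v(rar)v^*$ carries a factor of $r' = p-r$ or of $ww^* = vr'v^*$; applying the Cauchy--Schwarz inequality $|\rho(xy)| \le \rho(xx^*)^{1/2}\rho(y^*y)^{1/2}$ for the normal states, each such term is bounded by a fixed multiple of $\rho_0(vr'v^*)^{1/2} < \delta^{1/2}$. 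Choosing $\delta$ small enough then yields $\|vav^* - uau^*\|_{w*,\rho} < \epsilon$ for all $a\in\F$ and $\rho\in\G$, as required.

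I expect the main obstacle to be the extension step: a partial isometry need not extend to a unitary, since the defects $1-v^*v$ and $1-vv^*$ need not be Murray--von Neumann equivalent. Overcoming this is precisely where proper infiniteness and countable decomposability enter — by deleting from $p$ an infinite subprojection $r'$ of arbitrarily small $\G$-mass one enlarges each defect to a projection equivalent to $1_\Mul$, after which the equivalence of the two defects, and hence the partial isometry $w$ completing $v$ to a unitary, is automatic. The remaining estimates are then routine Cauchy--Schwarz bounds once $r'$ has been chosen with $\rho_0(vr'v^*)$ small.
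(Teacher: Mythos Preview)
Your overall strategy matches the paper's sketch: cut $v^*v$ down to a subprojection $r$ (the paper calls it $p$) so that both $1-r$ and $1-vrv^*$ are equivalent to $1_{\Mul}$, complete $vr$ to a unitary $u = vr + w$, and estimate. Your construction of $r$ (split $p=\sum_n p_n$ with $p_n\sim 1_{\Mul}$ and delete one summand of small mass) is a clean variant of what the paper is doing, and under your assumption $q=1_{\Mul}$ the Cauchy--Schwarz estimates are correct.

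The gap is the sentence ``absorbing the range defect into the construction below, the essential case is $q = 1_{\Mul}$.'' Your error control rests on the identity $ww^* = 1 - vrv^* = vr'v^*$, hence on $\rho_0(ww^*)<\delta$; but for general $q$ one has $1 - vrv^* = (1-q) + vr'v^*$, and $\rho(1-q)$ is not made small by any choice of $r' \le p$. Even the first step---decomposing $p$ as an orthogonal sum of projections each $\sim 1_{\Mul}$---already presupposes $p\sim 1_{\Mul}$, which fails, for instance, whenever $v^*v$ is a finite projection. So the reduction is not the bookkeeping you suggest; it is exactly where the general case needs more.

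The paper supplies the missing ingredient as a second approximation step. Having fixed $p\le v^*v$ with $1-p\sim 1\sim 1-vpv^*$ (when $v^*v$ is finite one may simply take $p=v^*v$), it then uses the proper infiniteness of $1-p$ and $1-vpv^*$ to choose the connecting partial isometry $w$ itself so that $|\rho(vpaw^*)|+|\rho(wapv^*)|+|\rho(waw^*)|<\delta$. In other words, in the paper it is the freedom in selecting $w$, not the smallness of $\rho(ww^*)$, that kills the remaining terms. Your argument recovers this for free when $q=1_{\Mul}$, but in general you must exploit proper infiniteness a second time, on the $w$ side, to finish.
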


\begin{proof}[Sketch of Proof]
  We may assume that the elements of $\F$ all have norm less than or equal
to one.

   Let $\delta > 0$ be arbitrary.

   Since $\Mul$ is properly infinite, we can find a projection
$p \in \Mul$ with $p \leq v^* v$ such that $|\rho( v^*a v  - v^* p a p v )| < \delta$ for all $a
\in \F$ and all $\rho \in \G$, and $1 - p \sim 1 \sim 1 - vpv^*$.

   Since $1 - p$ and $1 - vp v^*$ is properly infinite, we can
find a partial isometry $w \in \Mul$ with initial projection $1 - p$
and range projection $1 - v pv^*$ such that
for all $a \in \F$ and all $\rho \in \G$,
\[
|\rho( v p a w^* )| + | \rho(w a p v^*) | + | \rho(w a w^* )| < \delta.
\]

   Let $u \in \Mul$ be the unitary given by
$$u =_{df} u p + w.$$

   Then for all $a \in \F$ and all $\rho \in \G$,
$$|\rho( v a v^* - u a u^* )| < 2 \delta.$$
Since  $\delta$ is arbitrary, if we had chosen $\delta = \epsilon/2$ then the proof would be
complete.
\end{proof}

\begin{cor}  Let $\Mul$ be a countably decomposable
properly infinite von Neumann factor and let $\A$ be a C*-algebra.
Suppose that either $\A$ is nuclear or $\Mul$ is injective.

Let $\phi, \psi : \A \rightarrow \Mul$ be two *-homomorphisms.

Then $\textrm{W*-rank}(\phi)= \textrm{W*-rank}(\psi)$ if and only if $\phi$ and $\psi$ are weak* approximately
unitarily equivalent. \label{cor:properlyinfinitefactoruniqueness}
\end{cor}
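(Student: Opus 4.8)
The plan is to prove the two implications separately, using throughout that injectivity and semidiscreteness coincide for von Neumann algebras, so that the hypotheses of both Lemma \ref{lem:ProperlyInfiniteFactorCodomain} and Lemma \ref{lem:partialisometrytounitary} are at our disposal.

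For the \emph{only if} direction, assume $\textrm{W*-rank}(\phi) = \textrm{W*-rank}(\psi)$. Lemma \ref{lem:ProperlyInfiniteFactorCodomain} produces a net $\{v_\alpha\}$ of partial isometries with $v_\alpha^* \psi(a) v_\alpha \to \phi(a)$ weak*; writing $w_\alpha =_{df} v_\alpha^*$ this reads $w_\alpha \psi(a) w_\alpha^* \to \phi(a)$. For each finite $\F \subset \A$, each finite collection $\G$ of normal states and each $\epsilon > 0$, I first extract from this net a single partial isometry $w$ with $\| w \psi(a) w^* - \phi(a) \|_{w*, \rho} < \epsilon$ for all $a \in \F$, $\rho \in \G$, and then feed $w$ into Lemma \ref{lem:partialisometrytounitary}, applied to the finite set $\psi(\F) \subset \Mul$, to obtain a unitary $u$ with $\| u \psi(a) u^* - w \psi(a) w^* \|_{w*, \rho} < \epsilon$. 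The triangle inequality and indexing over the triples $(\F, \G, \epsilon)$ then yield a net of unitaries implementing $\psi \to \phi$ in the weak* topology. Since $\textrm{W*-rank}$ is symmetric, interchanging $\phi$ and $\psi$ and repeating the construction produces the second net implementing $\phi \to \psi$; together these give weak* approximate unitary equivalence.

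For the \emph{if} direction, assume $\phi$ and $\psi$ are weak* approximately unitarily equivalent, with nets $u_\alpha \phi(a) u_\alpha^* \to \psi(a)$ and $v_\beta \psi(a) v_\beta^* \to \phi(a)$. By \cite{HadwinTAMS} I may dispose of the type $I_\infty$ case, and the type $III$ case is immediate: in a countably decomposable type $III$ factor every nonzero projection is Murray--von Neumann equivalent to $1$, so it suffices to observe that the weak* convergence of each net forces $\phi(a) = 0 \Leftrightarrow \psi(a) = 0$. The remaining case is $\Mul$ of type $II_\infty$; let $\mathrm{Tr}$ be its normal semifinite faithful trace, valued in $[0, \infty]$. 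The key point is that $\mathrm{Tr}$ is weak* lower semicontinuous on $\Mul^+$, so from $u_\alpha \phi(b) u_\alpha^* \to \psi(b)$ and the traciality $\mathrm{Tr}(u_\alpha \phi(b) u_\alpha^*) = \mathrm{Tr}(\phi(b))$ I obtain $\mathrm{Tr}(\psi(b)) \leq \mathrm{Tr}(\phi(b))$ for every positive $b \in \A$; the second net gives the reverse inequality, whence $\mathrm{Tr}(\phi(b)) = \mathrm{Tr}(\psi(b))$ in $[0, \infty]$. Applying this to $b = f_\epsilon(a)$, where $f_\epsilon$ is continuous with $f_\epsilon = 0$ on $[0, \epsilon]$ and $f_\epsilon = 1$ on $[2\epsilon, \infty)$, so that $\phi(f_\epsilon(a)) = f_\epsilon(\phi(a)) \uparrow s(\phi(a))$ as $\epsilon \downarrow 0$, and invoking normality (order continuity) of $\mathrm{Tr}$, I get $\mathrm{Tr}(s(\phi(a))) = \mathrm{Tr}(s(\psi(a)))$. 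Finally, in a countably decomposable semifinite factor two projections with the same $[0, \infty]$-valued trace are Murray--von Neumann equivalent (finite projections by equality of their finite trace, infinite projections because, in a countably decomposable properly infinite factor, every infinite projection is equivalent to $1$), which is precisely $\textrm{W*-rank}(\phi) = \textrm{W*-rank}(\psi)$ in the sense of Definition \ref{df:rank}.

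I expect the necessity (\emph{if}) direction to be the main obstacle, since weak* convergence interacts poorly both with support projections and with the unbounded trace. The argument therefore has to route through the weak* lower semicontinuity of the normal semifinite trace and its order continuity, treat the trace as $[0,\infty]$-valued so that the two opposite inequalities combine correctly even when one side is infinite, and supply a separate (easy) argument in the type $III$ setting where no trace exists; the comparison of possibly infinite projections is exactly where countable decomposability is used.
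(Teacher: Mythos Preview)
Your argument is correct and matches the paper's approach: the ``only if'' direction is exactly Lemma~\ref{lem:ProperlyInfiniteFactorCodomain} followed by Lemma~\ref{lem:partialisometrytounitary} (the paper states only the former, leaving the upgrade to unitaries implicit), and the ``if'' direction splits into the type~III and semifinite cases just as the paper does. The only cosmetic difference is that for type~$II_\infty$ you argue directly via weak* lower semicontinuity and normality of the trace, whereas the paper phrases the same trace comparison as a proof by contradiction.
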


\begin{proof}
The ``only if" direction follows from Lemma \ref{lem:ProperlyInfiniteFactorCodomain}.

For the ``if" direction, if $\Mul$ is type III then use that two projections are Murray-von
Neumann equivalent if and only if they are both nonzero or both zero. If $\mathcal M$ is type II$_\infty$, then the equivalence of projections is determined by a normal semifinite trace $\tau$. Suppose, for contradiction, there were $a\in A^+$ such that the projection $\mathrm{supp}(\phi(a))$ is not Murray-von Neumann equivalent to the projection $\mathrm{supp}(\psi(a))$. Then, without loss of generality, one may assume that $\mathrm{supp}(\psi(a))$ is finite and $\tau(\mathrm{supp}(\phi(a)))>\tau(\mathrm{supp}(\psi(a)))$, and hence there is a continuous positive function $g$ such that $\tau(\phi(g(a)))>\tau(\psi (g(a)))$, which contradicts to the assumption that $\phi(g(a))$ is approximately conjugate to $\psi(g(a))$ in the weak* topology.


\end{proof}

Note that  weak* approximate unitary equivalence is a relatively flexible notion. In Corollary
\ref{cor:properlyinfinitefactoruniqueness} we can have examples of unital $\A$ and weak*
approximately unitarily equivalent maps $\phi$ and $\psi$ where $\phi(1_{\A})  = 1_{\Mul}$ but
$\psi(1_{\A})$ is a proper subprojection of $1_{\Mul}$.  On the other hand, $\sigma$-strong*
approximate unitary equivalence is a more rigid notion. In particular, for convergence in the
$\sigma$-strong* topology, we need for both maps to be unital.

\begin{lem}  Let $\Mul$ be a countably decomposable von Neumann algebra and let $\A$ be a unital
C*-algebra.
Let $\phi, \psi : \A \rightarrow \Mul$ be two unital *-homomorphisms.

Then
$\phi$ and $\psi$ are weak* approximately unitarily equivalent if and only
if $\phi$ and $\psi$ are $\sigma$-strong* approximately unitarily
equivalent.

In particular, if $\{ u_{\alpha} \}$ is a net of unitaries in $\Mul$
such that
$u_{\alpha} \psi(a) u_{\alpha}^* \rightarrow \psi(a)$ weak* for all $a \in \A$,
then $u_{\alpha} \psi(a) u_{\alpha}^* \rightarrow \psi(a)$ $\sigma$-strong*
for all $a \in \A$.
\label{lem:EquivalentEquivalences}
\end{lem}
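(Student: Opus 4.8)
The plan is to show that the \emph{same} conjugating nets witnessing weak* approximate unitary equivalence already witness $\sigma$-strong* approximate unitary equivalence. The reverse implication requires nothing, since the $\sigma$-strong* topology is finer than the weak* topology, so every $\sigma$-strong*-convergent net converges weak*. Thus the whole lemma reduces to the following upgrade, of which the ``in particular'' clause is the special case $\phi=\psi$: if $\{u_\alpha\}$ is a net of unitaries in $\Mul$ with $u_\alpha\phi(a)u_\alpha^*\to\psi(a)$ weak* for every $a\in\A$, then the same convergence holds $\sigma$-strong*. Applying this upgrade to the two nets supplied by the definition of weak* approximate unitary equivalence (the second with the roles of $\phi$ and $\psi$ interchanged) then gives the forward direction of the biconditional.

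To prove the upgrade I would fix $a\in\A$ and $\omega\in\Mul_*^+$, abbreviate $y_\alpha=u_\alpha\phi(a)u_\alpha^*$ and $b=\psi(a)$, and start from
\[
\big(\|y_\alpha-b\|_\omega^\sharp\big)^2=\omega\big((y_\alpha-b)^*(y_\alpha-b)\big)+\omega\big((y_\alpha-b)(y_\alpha-b)^*\big).
\]
Expanding each summand into four terms, the goal is to show that every term converges to $\omega(b^*b)$, respectively $\omega(bb^*)$, so that the alternating signs force the limit to be $0$. The diagonal term of the first summand is the decisive use of multiplicativity: since $\phi$ is a $*$-homomorphism, $y_\alpha^*y_\alpha=u_\alpha\phi(a^*a)u_\alpha^*$, and applying the hypothesis to the element $a^*a\in\A$ gives $y_\alpha^*y_\alpha\to\psi(a^*a)=b^*b$ weak*, whence $\omega(y_\alpha^*y_\alpha)\to\omega(b^*b)$. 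The diagonal term of the second summand is handled symmetrically, via $y_\alpha y_\alpha^*=u_\alpha\phi(aa^*)u_\alpha^*$ and the hypothesis for $aa^*$.

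The step I expect to be the main obstacle is the cross terms, since multiplication on $\Mul$ is not jointly weak*-continuous and one cannot simply pass to the limit inside a product. The resolution is that only the \emph{fixed} element $b$ is multiplied in, and $\Mul_*$ is a two-sided $\Mul$-module: the functionals $x\mapsto\omega(xb)$ and $x\mapsto\omega(b^*x)$ again lie in $\Mul_*$, being preadjoints of right and left multiplication by a fixed element. Using that the involution is weak*-continuous (so $y_\alpha^*\to b^*$ weak* as well), feeding the convergent nets $y_\alpha\to b$ and $y_\alpha^*\to b^*$ into these single normal functionals yields $\omega(y_\alpha^*b)\to\omega(b^*b)$ and $\omega(b^*y_\alpha)\to\omega(b^*b)$, and analogously for the second summand. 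Summing the four limits with their signs gives $\big(\|y_\alpha-b\|_\omega^\sharp\big)^2\to0$ for every $\omega\in\Mul_*^+$, i.e. $y_\alpha\to\psi(a)$ $\sigma$-strong*, which completes the upgrade.

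The unitality hypothesis is what makes the biconditional meaningful rather than what drives the upgrade, which used only multiplicativity. As the preceding remark observes, evaluating a $\sigma$-strong* equivalence at $a=1_\A$ turns $u_\alpha\phi(1_\A)u_\alpha^*$ into the constant net $1_\Mul$, whose only possible $\sigma$-strong* limit is $\psi(1_\A)$; this is consistent precisely because $\psi$ is also unital, and were $\psi(1_\A)$ a proper subprojection no $\sigma$-strong* equivalence could exist. Hence both maps must be unital for the two notions to coincide, which is why the assumption is imposed.
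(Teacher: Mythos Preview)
Your argument is correct. The paper proves the same lemma in one line by a different reduction: it observes that $\A$ is the norm-closed linear span of its unitaries, and that for a unitary $a\in\A$ the elements $u_\alpha\phi(a)u_\alpha^*$ and $\psi(a)$ lie in $U(\Mul)$ (here unitality of $\phi,\psi$ is used), so the cited fact that the weak* and $\sigma$-strong* topologies coincide on $U(\Mul)$ finishes the upgrade immediately. Your computation is essentially the mechanism behind that topology fact, transplanted from $U(\Mul)$ to the conjugation orbit: the identity $y_\alpha^*y_\alpha=u_\alpha\phi(a^*a)u_\alpha^*$ plays the role that $u^*u=1$ plays for unitaries. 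What you gain is self-containment and, as you note, an upgrade that works for every $a\in\A$ without first reducing to unitaries and without invoking unitality; what the paper gains is brevity by outsourcing the analytic step to a standard reference.
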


\begin{proof}
The ``if" direction is clear.

   The proof of the ``only if" direction follows from the fact that on the unitary group
of $\Mul$,
the weak* topology is the same as the $\sigma$-strong* topology. Also,
$\A$ is the (norm-) closed linear span its unitaries.
\end{proof}

\begin{cor}  Let $\Mul$ be a countably decomposable
properly infinite von Neumann factor and let $\A$ be a unital C*-algebra.
Suppose that either $\A$ is nuclear or $\Mul$ is injective.

Let $\phi, \psi : \A \rightarrow \Mul$ be two unital *-homomorphisms.

Then the following statements are equivalent:
\begin{enumerate}
\item $\textrm{W*-rank}(\phi) = \textrm{W*-rank}(\psi)$
\item $\phi$ and $\psi$ are weak* approximately  unitarily equivalent
\item $\phi$ and $\psi$ are $\sigma$-strong* approximately unitarily
equivalent.
\end{enumerate}
\end{cor}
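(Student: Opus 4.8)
The plan is to deduce this three-way equivalence by chaining together two results already established above, so that no new construction is required. First I would note that the equivalence of (1) and (2) is precisely the content of Corollary \ref{cor:properlyinfinitefactoruniqueness}. That corollary is stated for arbitrary (not necessarily unital) pairs of *-homomorphisms $\phi, \psi : \A \to \Mul$ into a countably decomposable properly infinite von Neumann factor, under exactly the hypothesis used here that either $\A$ is nuclear or $\Mul$ is injective, and it asserts that $\textrm{W*-rank}(\phi) = \textrm{W*-rank}(\psi)$ holds if and only if $\phi$ and $\psi$ are weak* approximately unitarily equivalent. Since our $\phi$ and $\psi$ are in particular such a pair, (1) $\Leftrightarrow$ (2) follows immediately.

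Next I would obtain (2) $\Leftrightarrow$ (3) from Lemma \ref{lem:EquivalentEquivalences}. This is exactly the step where the extra hypotheses of the present corollary --- that $\A$ is unital and that $\phi$ and $\psi$ are unital *-homomorphisms --- are used; these were absent from Corollary \ref{cor:properlyinfinitefactoruniqueness} and are precisely what the lemma requires. Under these hypotheses the lemma states that weak* approximate unitary equivalence and $\sigma$-strong* approximate unitary equivalence coincide, which is the equivalence (2) $\Leftrightarrow$ (3). Combining the two biconditionals closes the cycle among (1), (2), and (3).

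There is essentially no obstacle at the level of this corollary: all of the genuine work has already been carried out in the preceding results --- the excision-based partial-isometry construction of Lemma \ref{lem:Preproperlyinfinitenullideal} and Corollary \ref{cor:properlyinfinitenullideal} feeding into Corollary \ref{cor:properlyinfinitefactoruniqueness}, together with the reduction of weak* to $\sigma$-strong* convergence on the unitary group in Lemma \ref{lem:EquivalentEquivalences}. The only point demanding any care is the bookkeeping with hypotheses: one must check that the unital assumptions here match exactly what Lemma \ref{lem:EquivalentEquivalences} needs, and conversely that their absence in Corollary \ref{cor:properlyinfinitefactoruniqueness} causes no trouble for the (1) $\Leftrightarrow$ (2) direction, where unitality plays no role.
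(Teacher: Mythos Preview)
Your proposal is correct and matches the paper's proof exactly: the paper also derives (1) $\Leftrightarrow$ (2) from Corollary \ref{cor:properlyinfinitefactoruniqueness} and (2) $\Leftrightarrow$ (3) from Lemma \ref{lem:EquivalentEquivalences}, with the unitality hypotheses serving precisely the role you identify.
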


\begin{proof}
   That (1) is equivalent to (2) follows from Corollary
\ref{cor:properlyinfinitefactoruniqueness}.
That (2) is equivalent to (3) follows from
Lemma \ref{lem:EquivalentEquivalences}.
\end{proof}

\section{Uniqueness for general von Neumann algebra codomains}

In this section, we will generalize the results of section 3 to a general von Neumann algebra $\Mul$ with separable predual by using its direct integral decomposition along its centre. Without loss of generality, we can assume that $\Mul$ acts on a separable Hilbert space $\h$. We refer the reader to \cite{KadisonVolume2}
Chapter 14 for notation and preliminary results.

The centre $Z(\Mul)$ of $\Mul$ is isomorphic to $L^{\infty}(T, \mu)$ where $(T, \mu)$ is a
(locally compact compete separable metric) measure space.  Let $\{ \h_t \}_{t \in T}$ and $\{
\Mul_t \}_{t \in T}$ be the corresponding direct integral decompositions of $\h$ and $\Mul$
respectively.

For a positive element $a$ in a von Neumann algebra $\N$,
let $supp(a)$ denote the support projection of $a$.
The first lemma is a standard computation.

\begin{lem}
Let $\Mul$ be a von Neumann algebra with separable predual and let $\Mul = \int^{\oplus}_T \Mul_t
 d \mu(t)$ be its central decomposition.

Suppose that $a, b \in \Mul$ are positive elements such that
$supp(a)$ is Murray-von Neumman equivalent to $supp(b)$ in $\Mul$.

Then $a(t), b(t) \in \Mul_t$ are positive elements and
$supp(a(t))$ is Murray-von Neumann equivalent to $supp(b(t))$, for
$\mu$-a.e. $t \in T$.
\label{lem:DirectIntegralEquivalence}
\end{lem}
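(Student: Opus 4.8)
The plan is to lift the Murray--von Neumann equivalence of $supp(a)$ and $supp(b)$ to a partial isometry, decompose that partial isometry along the central decomposition, and verify fibrewise that it implements the equivalence of the fibre supports. The only substantive point is that the support projection decomposes correctly, i.e.\ that $supp(a)(t) = supp(a(t))$ for $\mu$-a.e.\ $t$; granting this, everything else is formal manipulation of the decomposition.

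First I would record the standard facts about the decomposition $*$-isomorphism $\Phi \colon \Mul \to \int^{\oplus}_T \Mul_t\, d\mu(t)$ onto the algebra of decomposable operators (see \cite{KadisonVolume2}, Chapter 14). Since $\Phi$ is a normal $*$-isomorphism, for $x, y \in \Mul$ one has $(xy)(t) = x(t)y(t)$ and $(x^*)(t) = x(t)^*$ for $\mu$-a.e.\ $t$, and $\Phi$ intertwines the continuous functional calculus with the fibrewise functional calculus, so $f(x)(t) = f(x(t))$ a.e.\ for each continuous $f$. Being normal, $\Phi$ also preserves suprema of bounded increasing sequences, and such suprema in the decomposable algebra are computed fibrewise almost everywhere.

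Next I establish the key identity. Choose the continuous functions $f_n(s) =_{df} \min(ns, 1)$ with $f_n(0) = 0$, so that $f_n \nearrow \chi_{(0,\infty)}$ pointwise on $[0, \|a\|]$; then $f_n(a) \nearrow supp(a)$ as a supremum in $\Mul$, and $f_n(c) \nearrow supp(c)$ in $\Mul_t$ for every positive $c$. For each fixed $n$ the equality $f_n(a)(t) = f_n(a(t))$ holds off a $\mu$-null set $N_n$; discarding the countable union $\bigcup_n N_n$ together with the null set on which the fibrewise monotone convergence fails, I obtain, for a.e.\ $t$,
\[
supp(a)(t) = \sup_n f_n(a)(t) = \sup_n f_n(a(t)) = supp(a(t)),
\]
and likewise $supp(b)(t) = supp(b(t))$ a.e. Finally, by hypothesis there is a partial isometry $w \in \Mul$ with $w^*w = supp(a)$ and $ww^* = supp(b)$. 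Decomposing $w = \int^{\oplus}_T w(t)\, d\mu(t)$ and using that $\Phi$ respects products and adjoints, I get, for a.e.\ $t$,
\[
w(t)^*w(t) = (w^*w)(t) = supp(a)(t) = supp(a(t)), \qquad w(t)w(t)^* = (ww^*)(t) = supp(b)(t) = supp(b(t)).
\]
In particular $w(t)^*w(t)$ is a projection, so $w(t)$ is a partial isometry in $\Mul_t$ witnessing $supp(a(t)) \sim supp(b(t))$; positivity of $a(t), b(t)$ a.e.\ is immediate from positivity of $a, b$.

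The main obstacle, such as it is, lies entirely in the bookkeeping of the step establishing $supp(a)(t) = supp(a(t))$: one must combine the fibrewise functional calculus with the fibrewise computation of monotone suprema while controlling a countable family of exceptional null sets. All of these are standard features of the central decomposition, which is why the statement is labelled a routine computation.
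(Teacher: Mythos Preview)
Your argument is correct and is exactly the standard computation the paper alludes to: the paper does not actually supply a proof of this lemma, remarking only that it ``is a standard computation'' and omitting the details. Your approach---reducing to the identity $supp(a)(t) = supp(a(t))$ a.e.\ via the fibrewise functional calculus applied to $f_n(a) \nearrow supp(a)$, and then decomposing a witnessing partial isometry---is precisely the routine verification one expects here, so there is nothing further to compare.
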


\begin{prop}  Let $\Mul$ be a properly infinite von Neumann algebra
with separable predual, and let $\A$ be a separable C*-algebra.
Suppose that either $\A$ is nuclear or $\Mul$ is semidiscrete.

Let $\phi, \psi : \A \rightarrow \Mul$ be injective *-homomorphisms.

Then $\textrm{W*-rank}(\phi) = \textrm{W*-rank}(\psi)$ if and only if $\phi$ and $\psi$ are weak* approximately
unitarily equivalent.

\label{prop:separableproperlyinfinitecodomain}
\end{prop}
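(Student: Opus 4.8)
The plan is to reduce the general properly infinite case to the factor case already handled in Corollary~\ref{cor:properlyinfinitefactoruniqueness}, using the central direct integral decomposition $\Mul = \int^{\oplus}_T \Mul_t \, d\mu(t)$ introduced at the start of this section. Since $\Mul$ has separable predual, such a decomposition exists with each $\Mul_t$ a factor for $\mu$-a.e.\ $t$. The two *-homomorphisms $\phi, \psi : \A \to \Mul$ decompose fiberwise as $\phi = \int^{\oplus}_T \phi_t \, d\mu(t)$ and $\psi = \int^{\oplus}_T \psi_t \, d\mu(t)$, where (because $\A$ is separable) one can arrange that $\phi_t, \psi_t : \A \to \Mul_t$ are *-homomorphisms for almost every $t$. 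The first task is to transfer the hypothesis $\textrm{W*-rank}(\phi) = \textrm{W*-rank}(\psi)$ to the fibers: this is exactly the content of Lemma~\ref{lem:DirectIntegralEquivalence}, applied to $a$ ranging over a countable dense subset of $\A^+$, which gives $\textrm{W*-rank}(\phi_t) = \textrm{W*-rank}(\psi_t)$ for $\mu$-a.e.\ $t$.

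Next I would check that the standing hypotheses pass to the fibers so that Corollary~\ref{cor:properlyinfinitefactoruniqueness} applies fiberwise. If $\A$ is nuclear this is immediate, since the domain algebra is unchanged. If instead $\Mul$ is semidiscrete, I would need that $\Mul_t$ is semidiscrete (equivalently injective, in the factor case) for almost every $t$; this follows because semidiscreteness of a von Neumann algebra with separable predual is inherited by the fibers of its central decomposition. Each $\Mul_t$ is also countably decomposable (being a factor with separable predual). Thus for $\mu$-a.e.\ $t$, Corollary~\ref{cor:properlyinfinitefactoruniqueness} gives that $\phi_t$ and $\psi_t$ are weak* approximately unitarily equivalent in $\Mul_t$.

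The main obstacle, and the technical heart of the argument, is the measurable selection and integration step: assembling the fiberwise approximating unitaries into a single global net of unitaries in $\Mul$ that witnesses weak* approximate unitary equivalence. Having fiberwise approximation is not automatically the same as global approximation, because the approximating indices and the unitaries vary with $t$ and must be chosen $\mu$-measurably. Concretely, I would fix a finite set $\F \subset \A$, a finite family $\G \subset \Mul_*$ of normal states, and $\epsilon > 0$; for each $t$ one has a unitary $u_t \in \Mul_t$ with $\| u_t \phi_t(a) u_t^* - \psi_t(a) \|_{w^*,\rho_t} < \epsilon$ for $a \in \F$, where $\rho_t$ is the disintegration of $\rho \in \G$. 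The point is that the set of admissible $u_t$ is a nonempty measurable field, so by a measurable selection theorem one can choose $t \mapsto u_t$ measurably, yielding a unitary $u = \int^{\oplus}_T u_t \, d\mu(t) \in \Mul$. Integrating the fiberwise estimates against $d\mu$ then gives $\| u \phi(a) u^* - \psi(a) \|_{w^*,\rho} < \epsilon$ for all $a \in \F$ and $\rho \in \G$. Running over all such $(\F, \G, \epsilon)$ produces the desired net $\{u_\alpha\}$, and the symmetric net $\{v_\beta\}$ is obtained by interchanging the roles of $\phi$ and $\psi$. Finally, the converse direction (weak* approximate unitary equivalence implies equal W*-rank) is the easy direction: approximate inner conjugation in the weak* topology preserves the Murray--von Neumann equivalence class of support projections, arguing as in the ``if'' part of Corollary~\ref{cor:properlyinfinitefactoruniqueness}.
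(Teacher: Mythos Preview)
Your proposal is correct and follows essentially the same route as the paper: disintegrate $\Mul$ into factors, use Lemma~\ref{lem:DirectIntegralEquivalence} to push the W*-rank hypothesis to the fibers, apply Corollary~\ref{cor:properlyinfinitefactoruniqueness} fiberwise (invoking \cite{ChoiEffrosDuke} for the inheritance of semidiscreteness), and then perform a measurable selection to assemble a global unitary. The paper's treatment of the selection step is slightly more concrete---it fixes a countable strongly dense family $\{w_k\}$ in $U(\Mul)$, builds explicit Borel sets $\s_k$ from their decompositions, and cites the measurable selection principle of \cite{KadisonVolume2} Theorem 14.3.6---but this is exactly the implementation of what you outlined.
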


\begin{proof}

  To prove the sufficiency of the condition (i.e., the ``only if direction")
it is enough to prove the following statement:

Let $\epsilon > 0$ be given and let $\rho \in \Mul_*$ be a positive normal
linear functional.  Let $a_1, a_2, ..., a_n \in \A$ be elements
such that $\| a_i \| \leq 1$ for $1 \leq i \leq n$.
Then there exists a unitary $u \in \Mul$ such that
$$\| u \phi(a_i) u^* - \psi(a_i) \|_{w*, \rho} < \epsilon$$
for $1 \leq i \leq n$.


Keeping the above notation, we can moreover assume that $\h$ is infinite dimensional and therefore
that $\h_t$ is a separable infinite dimensional Hilbert space and $\Mul_t$ is a properly infinite
factor for $\mu$-a.e. $t \in T$. Note that by \cite{ChoiEffrosDuke}, if $\Mul$ is semidiscrete
then $\Mul_t$ is semidiscrete for $\mu$-a.e. $t \in T$.

Let $\tilde{\h}$ be a separable infinite dimensional Hilbert space. By \cite{KadisonVolume2} Lemma
 14.1.23, let $\{ x_t \}_{t \in T}$ be a family of maps such that the following hold:
\begin{enumerate}
\item $x_t : \h_t \rightarrow \tilde{\h}$ is a unitary isomorphism for
all $t \in T$.
\item For each measurable field $\xi : t \in T \mapsto \xi(t) \in \h_t$, the map
$t \in T \mapsto x_t \xi(t) \in \tilde{\h}$ is measurable (i.e., for all $\nu \in \tilde{\h}$,
 the map $t \mapsto (x_t \xi(t) | \nu)$ is measurable).
\item For each measurable field $y : t \in T \rightarrow y(t) \in \Mul_t$, the map
$t \in T \mapsto x_t y(t) x_t^* \in \mathbb{B}(\tilde{\h})$ is measurable (i.e., for all $\xi,
\nu \in \tilde{\h}$, the map $t \mapsto (x_t y(t) x_t^* \xi | \nu)$ is measurable).
\end{enumerate}

Let $\{ w_k \}_{k=1}^{\infty}$ be a countable set of unitaries which is strongly dense in
$U(\Mul)$. Hence, $\{ w_k(t)) \}_{k=1}^{\infty}$ is strongly dense in $U(\Mul_t)$ for $\mu$-almost
every $t \in T$. For all $k \geq 1$, by changing $w_k$ on a Borel null subset of $T$ if necessary,
we may assume that $t \mapsto x_t w_k (t) x_t^*$ is Borel.


Let $\{ \rho_t \}_{t \in T}$, $\{ \phi_t \}_{t \in T}$, $\{ \psi_t \}_{t \in T}$ be the
corresponding direct integral decompositions of $\rho$, $\phi$, $\psi$ respectively. Changing $\{
\phi_t \}_{t \in T}$ and $\{ \psi_t \}_{t \in T}$ on a Borel null set if necessary, we may assume
that for $1 \leq i \leq n$ and $k \geq 1$, the maps 
$t \mapsto x_t \phi_t(a_i) x_t^*$, 
$t \mapsto x_t \psi_t(a_i) x_t^*$ and 
$t \mapsto |\rho_t( w_k(t) \phi_t(a_i) w_k(t)^* - \psi_t(a_i))|$ are Borel.

Recall (see \cite{KechrisBook} Theorem 14.12) that if $X$ and $Y$ are standard Borel spaces then a
map $f : X \rightarrow Y$ is Borel if and only if its graph is Borel.



Hence, for all $k \geq 1$,
\begin{eqnarray*}
& & \s_k \\
& =_{df} & \bigcap_{i=1}^n \{ (t, x_t w_k(t) x_t^*) \in T \times U(\tilde{\h})
 : | \rho_t( w_k(t) \phi_t(a_i) w_k(t)^* -
\psi_t(a_i)) | < \epsilon/2 \} \\
& = & graph \{ t \mapsto x_t w_k (t) x_k^* \} \cap (\bigcap_{i=1}^n \{ t \in T : | \rho_t( w_k(t)
\phi_t(a_i) w_k(t)^* - \psi_t(a_i)) | < \epsilon/2 \} \times U(\tilde{\h}))
\end{eqnarray*}
is Borel.

Therefore, $\s =_{df} \bigcup_{k=1}^{\infty} \s_k$ is Borel.

By Lemma \ref{lem:DirectIntegralEquivalence}, since $\textrm{W*-rank}(\phi) = \textrm{W*-rank}(\psi)$,
$\textrm{W*-rank}(\phi_t) = \textrm{W*-rank}(\psi_t)$ for $\mu$-a.e. $t \in T$. Hence, by Corollary
\ref{cor:properlyinfinitefactoruniqueness}, For $\mu$-a.e. $t \in T$, there exists a unitary $w'
\in U(\tilde{\h})$ with $x_t^* w' x_t \in \Mul_t$ such that $(t, w') \in \s$.

    Therefore, by \cite{KadisonVolume2} Theorem
14.3.6 (measurable selection principle), there exists a Borel null set $N \subset T$ and a $\mu$
measurable map $T - N \rightarrow U(\tilde{\h}) : t \mapsto u(t)$ such that $(t, u(t)) \in \s$ for
all $t \in T-N$. The map $t \mapsto x_t^* u(t) x_t$ is the decomposition of a unitary $u \in \Mul$
such that
$$\| u \phi(a_i) u^* - \psi(a_i) \|_{w*, \rho} < \epsilon$$
for $1 \leq i \leq n$.

   The proof of the necessity of the condition (i.e., the ``if" direction) is similar to
that of Corollary \ref{cor:properlyinfinitefactoruniqueness}.
\end{proof}


\begin{prop}  Let $\A$ be a C*-algebra and $\Mul$ a von Neumann algebra.
Suppose that either $\A$ is nuclear or $\Mul$ is semidiscrete.

Let $\phi, \psi : \A \rightarrow \Mul$ be *-homomorphisms.

Then $\textrm{W*-rank}(\phi) = \textrm{W*-rank}(\psi)$ if and only if $\phi$ and $\psi$ are weak* approximately
unitarily equivalent. \label{prop:generalVNAuniqueness}
\end{prop}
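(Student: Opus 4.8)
The plan is to deduce the general statement from the two special cases already established—Proposition \ref{prop:II_1uniqueness} for finite codomains and Proposition \ref{prop:separableproperlyinfinitecodomain} for properly infinite codomains with separable predual—by a sequence of reductions that localize the problem. As in the earlier results, the ``if'' direction is the easy one: weak* approximate unitary equivalence forces all normal traces and dimension functions of $\Mul$ to agree on $\phi$ and $\psi$, and by comparison theory (passing to factors via the central decomposition, exactly as in Corollary \ref{cor:properlyinfinitefactoruniqueness} and Lemma \ref{lem:DirectIntegralEquivalence}) this yields Murray--von Neumann equivalence of the support projections of $\phi(a)$ and $\psi(a)$ for every positive $a$. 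So I concentrate on the ``only if'' direction.

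First I would record that $\textrm{W*-rank}(\phi) = \textrm{W*-rank}(\psi)$ forces $\ker \phi = \ker \psi$, so after dividing by the common kernel I may assume $\phi,\psi$ are injective. Next I would exploit that weak* approximate unitary equivalence is a local condition: it suffices, for each finite $\F \subset \A$, each finite family $\G \subset \Mul_*$ of normal states, and each $\epsilon > 0$, to produce a single unitary $u \in \Mul$ with $\| u\phi(a)u^* - \psi(a)\|_{w*,\rho} < \epsilon$ for $a \in \F$, $\rho \in \G$. Then I would split along the centre: writing $1 = z_{\mathrm{fin}} + z_{\infty}$ with $z_{\mathrm{fin}}, z_\infty \in Z(\Mul)$ central projections such that $\Mul z_{\mathrm{fin}}$ is finite and $\Mul z_\infty$ is properly infinite, and noting that $\rho = \rho z_{\mathrm{fin}} + \rho z_\infty$, the compressed homomorphisms $z_{\mathrm{fin}}\phi, z_{\mathrm{fin}}\psi$ and $z_\infty\phi, z_\infty\psi$ can be treated separately; a unitary solving each half assembles by direct sum into a unitary of $\Mul$ because $z_{\mathrm{fin}}, z_\infty$ are central, and the two seminorm estimates simply add. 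On the finite summand $\Mul z_{\mathrm{fin}}$, equality of W*-rank is equivalent to equality of all normal tracial states (the remark following Definition \ref{df:rank}, via \cite{BlackadarGeneralBook}), so Proposition \ref{prop:II_1uniqueness} applies and even gives $\sigma$-strong* convergence, a fortiori weak*.

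The real work is the properly infinite summand $\Mul z_\infty$, where I want to invoke Proposition \ref{prop:separableproperlyinfinitecodomain}; for this I must manufacture, inside $\Mul z_\infty$, a properly infinite von Neumann subalgebra $\N$ with separable predual that carries all the relevant data. Concretely, I would build $\N$ as the von Neumann algebra generated by: the (norm-separable) images $\phi(\A_0), \psi(\A_0)$ of a separable C*-subalgebra $\A_0 \subseteq \A$ containing $\F$; a countable family of partial isometries realizing the equivalences $\mathrm{supp}(\phi(a)) \sim \mathrm{supp}(\psi(a))$ for $a$ in a countable set that pins down the W*-rank on $\A_0$ (using the cut-downs $(a-\epsilon)_+$ to get around the failure of continuity of support projections); and a sequence of isometries witnessing that $1_\N$ is properly infinite. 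Since $\N$ is then countably generated and countably decomposable it has separable predual, the support projections of $\phi(a), \psi(a)$ are computed inside $\N$, so $\phi|_{\A_0}, \psi|_{\A_0} : \A_0 \to \N$ are injective with $\textrm{W*-rank}(\phi|_{\A_0}) = \textrm{W*-rank}(\psi|_{\A_0})$ in $\N$, and Proposition \ref{prop:separableproperlyinfinitecodomain} furnishes a unitary in $\N \subseteq \Mul z_\infty$ doing the job on $\F$ and $\G$ (extended to a unitary of $\Mul z_\infty$ if $1_\N < z_\infty$). To keep the hypotheses alive I would, in the case $\A$ nuclear, choose $\A_0$ itself nuclear—possible because nuclearity is local, the completely positive approximation property surviving a Löwenheim--Skolem exhaustion—and, in the case $\Mul$ semidiscrete, arrange $\N$ semidiscrete by the analogous exhaustion built from the normal matricial factorizations of $\mathrm{id}_\Mul$, so that the approximating maps restrict to $\N \to \M_n(\mathbb{C}) \to \N$.

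I expect the semidiscreteness-preserving construction of $\N$ to be the main obstacle. Unlike corners, a general von Neumann subalgebra of an injective algebra need not be injective, so $\N$ must be grown with enough care that the normal completely positive matricial approximations of $\mathrm{id}_\Mul$ descend to it; coupled with the requirement that $\N$ simultaneously be properly infinite, have separable predual, and internally realize every Murray--von Neumann equivalence needed for $\textrm{W*-rank}(\phi|_{\A_0}) = \textrm{W*-rank}(\psi|_{\A_0})$—delicate precisely because support projections are not norm-continuous—this single step is where the argument has to be engineered most carefully. Once $\N$ is in hand, the remaining pieces are the bookkeeping of central projections and the passage, via Lemma \ref{lem:partialisometrytounitary}, from partial isometries to unitaries.
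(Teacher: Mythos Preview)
Your approach is essentially the paper's: reduce $\A$ to a separable (nuclear) subalgebra via \cite{ChoiEffros} Proposition~5, reduce $\Mul$ to the von Neumann subalgebra generated by $\phi(\A)$, $\psi(\A)$, and partial isometries witnessing $\mathrm{supp}\bigl(\phi((a-1/n)_+)\bigr) \sim \mathrm{supp}\bigl(\psi((a-1/n)_+)\bigr)$ for $a$ ranging over a countable dense subset of $\A_+$, then split into finite and properly infinite summands and invoke Propositions~\ref{prop:II_1uniqueness} and~\ref{prop:separableproperlyinfinitecodomain}. The paper performs the finite/properly-infinite split \emph{after} the separable reduction rather than before, and passes from ``countably generated'' to ``separable predual'' by writing a countably generated von Neumann algebra as a direct product of $\sigma$-finite (hence separable-predual) ones, rather than building a single $\N$ with separable predual as you do; this sidesteps your unjustified assertion that $\N$ is countably decomposable, and the same reduction is needed on the finite summand $\Mul z_{\mathrm{fin}}$ before Proposition~\ref{prop:II_1uniqueness} (which requires countable decomposability) can be applied there. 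Finally, the paper simply declares the semidiscrete case ``similar'' without elaboration; your L\"owenheim--Skolem sketch for arranging $\N$ semidiscrete is exactly the content that word is hiding, and is the right idea.
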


\begin{proof}

Let us only prove the statement in the case that $A$ is nuclear (the proof for the case that $\mathcal M$ is semidiscrete is similar).

By Proposition 5 of \cite{ChoiEffros}, for any countable subset $\mathcal S\subseteq \mathcal A$, there is a separable nuclear sub-C*-algebra $\mathcal C$ with $\mathcal S\subseteq C$. Therefore, we may assume that $\mathcal A$ is 
separable. 
Hence, let $\F$ be a countable dense set in $\A_+$.  For each $a \in \F$
and $n \geq 1$, let $v_{a,n}$ be a partial isometry witnessing the
Murray-von Neumann equivalence of the support projections of 
$\phi((a - 1/n)_+)$ and $\psi((a - 1/n)_+)$.
By considering the von Neumann algebra generated by $\phi(A)\cup\psi(A) \cup
\{ v_{a,n} : a \in \F, n \geq 1 \}$, we may assume further that $\mathcal M$ is countably generated. Then $\mathcal M$ is a direct product of von Neumann algebras with separable predual. Since any von Neumann algebra is a direct sum of a finite von Neumann algebra and a properly infinite von Neumann algebra, the proof is reduced to the case that $\mathcal M$ is finite or properly infinite, and it follows from Proposition \ref{prop:II_1uniqueness} and Proposition \ref{prop:separableproperlyinfinitecodomain} respectively.
%
%
%
%
\end{proof}

\section{Uniqueness for C*-algebra codomains}

   We need some ideas that have been useful in recent studies of the Cuntz semigroup.
The following exposition follows \cite{OrRorThi}.   Let $\A$ be a C*-algebra. A \emph{tracial
weight} on $\A$ is an additive function $\tau : \A_+ \rightarrow [0, \infty]$ satisfying
$\tau(\lambda a) = \lambda \tau(a)$ and $\tau(x^* x) = \tau(x x^*)$ for all $a \in \A_+$, $x \in
\A$ and $\lambda \in [0, \infty)$.  A tracial weight $\tau$ is \emph{lower semicontinuous} if
$\tau(a) = lim \tau( a_{\alpha})$ whenever $\{ a_{\alpha} \}$ is a norm-convergent increasing net
with limit $a$. We let $T(\A)$ denote the collection of lower semicontinuous tracial weights on
$\A$.

  Each $\tau \in T(\A)$ induces lower semicontinuous dimension function
  $d_{\tau} : \A_+ \rightarrow [0, \infty]$ given by
  $d_{\tau}(a) =_{df} sup_{\epsilon > 0} \tau(f_{\epsilon}(a))$ for all $a \in \A_+$.  Here,
  for every $\epsilon > 0$, $f_{\epsilon} : [0, \infty) \rightarrow [0, \infty)$ is the unique
  continuous function which is $0$ on $0$, $1$ on $[\epsilon, \infty)$, and
  linear on $[0, \epsilon]$.

\begin{lem}  Let $\A$ be a C*-algebra and let $a, b \in \A$ be positive elements, and
let $supp(a), supp(b) \in \A^{**}$ be their (respective support) projections. Then we have the
following:
\begin{enumerate}
\item If $supp(a)$ is Murray--von Neumann subequivalent to $supp(b)$ in $\A^{**}$ then
$d_{\tau}(a) \leq d_{\tau}(b)$ for all $\tau \in T(\A)$;  and if $supp(a)$ is Murray--von Neumann
equivalent to $supp(b)$ in $\A^{**}$ then $d_{\tau}(a) = d_{\tau}(b)$ for all $\tau \in T(\A)$.
\item Suppose, in addition, that $\A$ is separable.
Then the converse of the above statements hold. I.e.,  if $d_{\tau}(a) \leq d_{\tau}(b)$ for all
$\tau \in T(\A)$ then $supp(a)$ is Murray--von Neumann subequivalent to $supp(b)$ in $\A^{**}$;
and if $d_{\tau}(a) = d_{\tau}(b)$ for all $\tau \in T(\A)$ then $supp(a)$ is Murray--von Neumann
equivalent to $supp(b)$ in $\A^{**}$
\end{enumerate}
\label{lem:OrtegaRordamThiel}
\end{lem}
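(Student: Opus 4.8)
The plan is to transport the problem from the dimension functions $d_\tau$ on $\A$ to genuine normal traces on the enveloping von Neumann algebra $M := \A^{**}$, where Murray--von Neumann comparison of projections is controlled by traces. First I would record the correspondence, classical since Combes and used systematically in \cite{OrRorThi}, between lower semicontinuous tracial weights $\tau \in T(\A)$ and normal traces $\overline{\tau}$ on $M$: each $\tau$ extends to a unique normal trace $\overline{\tau}$, and restriction to $\A_+$ inverts this extension, so the two classes are identified. The computational core is the identity, valid for every positive $a \in \A$,
\[
d_\tau(a) = \overline{\tau}(\mathrm{supp}(a)).
\]
This holds because, as $\epsilon \downarrow 0$, the elements $f_\epsilon(a) \in \A$ increase to the support ($=$ range) projection $\mathrm{supp}(a) \in M$ in the strong topology; normality of $\overline{\tau}$ gives continuity along this increasing net, and since $\overline{\tau}(f_\epsilon(a)) = \tau(f_\epsilon(a))$ we get $d_\tau(a) = \sup_{\epsilon > 0}\tau(f_\epsilon(a)) = \overline{\tau}(\mathrm{supp}(a))$.

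With this identity the forward implications in (1) are immediate and need no separability. If $\mathrm{supp}(a)$ is Murray--von Neumann subequivalent to $\mathrm{supp}(b)$, I would choose a partial isometry $v \in M$ with $v^*v = \mathrm{supp}(a)$ and $vv^* \leq \mathrm{supp}(b)$; the trace property yields $\overline{\tau}(\mathrm{supp}(a)) = \overline{\tau}(v^*v) = \overline{\tau}(vv^*) \leq \overline{\tau}(\mathrm{supp}(b))$, hence $d_\tau(a) \leq d_\tau(b)$ for all $\tau$. Taking $vv^* = \mathrm{supp}(b)$ in the equivalence case gives equality.

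For the converse in (2), with $\A$ separable, the identity turns the hypothesis $d_\tau(a) \leq d_\tau(b)$ for all $\tau \in T(\A)$ into the statement that $\overline{\tau}(\mathrm{supp}(a)) \leq \overline{\tau}(\mathrm{supp}(b))$ for every normal trace $\overline{\tau}$ on $M$ (every such trace arising, by the correspondence, as the extension of some $\tau$). It then suffices to prove the purely von Neumann algebraic fact that normal traces detect subequivalence of these support projections: were $\mathrm{supp}(a)$ not Murray--von Neumann subequivalent to $\mathrm{supp}(b)$, the comparison theorem would produce a nonzero central projection $z$ with $z\,\mathrm{supp}(b)$ subequivalent to $z\,\mathrm{supp}(a)$ but $z\,\mathrm{supp}(a)$ not subequivalent to $z\,\mathrm{supp}(b)$, and I would construct a normal trace on $zM$ separating them, extended by zero off $z$. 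The subequivalence converse then yields the equivalence converse by applying it in both directions together with the Schr\"oder--Bernstein theorem for projections in a von Neumann algebra.

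The hard part is exactly this construction of a separating trace on $zM$. On the semifinite, finite part a faithful normal semifinite trace sees the strict inequality of central dimensions directly; the real obstacle is the properly infinite and type III summands, where ordinary traces degenerate. Here separability of $\A$ is essential: it forces $\mathrm{supp}(a)$ and $\mathrm{supp}(b)$ to be countably decomposable open projections, which lets me reduce the comparison on those summands to a $\{0,\infty\}$-valued normal trace, or alternatively to run a direct-integral argument over the centre of $M$ --- making the fields of support projections and witnessing partial isometries measurable as in Section 4 --- and invoke Corollary \ref{cor:properlyinfinitefactoruniqueness} fibrewise. This is precisely the comparison-of-open-projections result of \cite{OrRorThi}, which I would quote at the point where a self-contained reconstruction becomes too long.
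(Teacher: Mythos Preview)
The paper's proof is a one-line citation to \cite{OrRorThi} (Corollary 5.4 and Theorem 5.8), so since you too ultimately defer the converse direction to \cite{OrRorThi}, your proposal lands in the same place. Your explicit argument for part (1) via the identity $d_\tau(a)=\overline{\tau}(\mathrm{supp}(a))$ is correct and more informative than the bare citation.

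A caution about your sketch for part (2), should you try to make it self-contained. First, $\A^{**}$ does not have separable predual even when $\A$ is separable, so the direct-integral machinery of Section~4 is unavailable there; and Corollary~\ref{cor:properlyinfinitefactoruniqueness} concerns weak* approximate unitary equivalence of $*$-homomorphisms, not Murray--von Neumann comparison of a single pair of projections, so invoking it fibrewise is not the right move. Second, the bijection you assert between $T(\A)$ and \emph{all} normal traces on $\A^{**}$, together with the claim that such traces detect comparison on the properly infinite part, is exactly where \cite{OrRorThi} does real work: separability enters to guarantee that $\mathrm{supp}(a)$ and $\mathrm{supp}(b)$ are $\sigma$-compact open projections, which is what makes the dimension-function data rich enough to force subequivalence. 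Your $\{0,\infty\}$-trace idea on type III summands has the right flavor, but the argument properly lives in the open-projection and Cuntz-semigroup framework of \cite{OrRorThi} rather than in classical von Neumann comparison alone.
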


\begin{proof}
The above are \cite{OrRorThi} Corollary 5.4 and Theorem 5.8.
\end{proof}

Motivated by Lemma \ref{lem:OrtegaRordamThiel} and Definition \ref{df:rank}, we define a notion of
rank for maps between C*-algebras.

\begin{df}
Let $\A$, $\B$ be C*-algebras and let $\phi, \psi : \A \rightarrow \B$ be *-homomorphisms.

We say that $\phi$ and $\psi$ have the same \emph{C*-rank} (and write ``$\textrm{C*-rank}(\phi) =
\textrm{C*-rank}(\psi)$") if for every positive element $a \in \A$ and for every $\tau \in T(\B)$,
$d_{\tau}(\phi(a)) = d_{\tau}(\psi(a))$. \label{df:W*-rank}
\end{df}

With notation as in Definition \ref{df:W*-rank}, let $i : \B \rightarrow \B^{**}$ be the natural
inclusion map.  Note that by Lemma \ref{lem:OrtegaRordamThiel}, if $\B$ is separable then
$\textrm{C*-rank}(\phi) = \textrm{C*-rank}(\psi)$ if and only if $\textrm{W*-rank}(i \circ \phi) = \textrm{W*-rank}(i \circ \psi)$.
Moreover, if we drop the hypothesis that $\B$ is separable, we still have the ``if" direction.

\begin{df}  Let $\A$ be a C*-algebra.
\begin{enumerate}
\item[(a)] $\A$ has the \emph{weak* uniqueness property} if for every
von Neumann algebra $\Mul$, for all *-homomorphisms $\phi, \psi : \A \rightarrow \Mul$,
$\textrm{W*-rank}(\phi) = \textrm{W*-rank}(\psi)$ if and only if $\phi$, $\psi$ are weak* approximately unitarily
equivalent.
\item[(b)]  Suppose, in addition that $\A$ is separable.
$\A$ has the \emph{weak uniqueness property} if for every unital separable
C*-algebra $\B$, for
all *-homomorphisms $\phi, \psi : \A \rightarrow \B$, $\textrm{C*-rank}(\phi) = \textrm{C*-rank}(\psi)$ if and only
if $\phi$, $\psi$ are weakly approximately unitarily equivalent.
\end{enumerate}
\label{df:UniquenessProperty}
\end{df}

\begin{lem}  Let $\A$ be a separable C*-algebra.

  Consider the following statements:
\begin{enumerate}
\item $\A$ is nuclear.
\item $\A$ has the weak* uniqueness property.
\item $\A$ has the weak uniqueness property.
\end{enumerate}

Then (1) $\Rightarrow$ (2) $\Rightarrow$ (3).

\label{lem:C*codomainUniqueness}
\end{lem}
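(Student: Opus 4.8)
For the implication (1)$\Rightarrow$(2), the hard work has essentially already been done: Proposition \ref{prop:generalVNAuniqueness} states precisely that if $\A$ is nuclear (or $\Mul$ is semidiscrete), then for *-homomorphisms $\phi, \psi : \A \rightarrow \Mul$ into an arbitrary von Neumann algebra $\Mul$, one has $\textrm{W*-rank}(\phi) = \textrm{W*-rank}(\psi)$ if and only if $\phi$ and $\psi$ are weak* approximately unitarily equivalent. So if I assume $\A$ is nuclear, this is exactly the statement that $\A$ has the weak* uniqueness property as defined in Definition \ref{df:UniquenessProperty}(a). Thus (1)$\Rightarrow$(2) is immediate from Proposition \ref{prop:generalVNAuniqueness}, and I would simply cite it.

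For the implication (2)$\Rightarrow$(3), the plan is to reduce the C*-algebra statement to the von Neumann algebra statement using the bidual. Suppose $\A$ has the weak* uniqueness property, and let $\B$ be a unital separable C*-algebra with *-homomorphisms $\phi, \psi : \A \rightarrow \B$. Let $i : \B \rightarrow \B^{**}$ be the canonical inclusion into the (von Neumann algebra) bidual. The key translation step is the remark following Definition \ref{df:W*-rank}: since $\B$ is separable, $\textrm{C*-rank}(\phi) = \textrm{C*-rank}(\psi)$ holds if and only if $\textrm{W*-rank}(i \circ \phi) = \textrm{W*-rank}(i \circ \psi)$, where these are now *-homomorphisms into the von Neumann algebra $\B^{**}$. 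Applying the weak* uniqueness property of $\A$ to the pair $i\circ\phi, i\circ\psi : \A \rightarrow \B^{**}$, I get that $\textrm{W*-rank}(i\circ\phi)=\textrm{W*-rank}(i\circ\psi)$ if and only if $i\circ\phi$ and $i\circ\psi$ are weak* approximately unitarily equivalent in $\B^{**}$, i.e.\ there are nets of unitaries in $\B^{**}$ implementing the approximate equivalence in the $\sigma(\B^{**},\B^*)$-topology.

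\textbf{The main obstacle} is the final descent: I have weak* approximate unitary equivalence by unitaries in $\B^{**}$, but the weak uniqueness property requires \emph{weak} approximate unitary equivalence by unitaries in $\B$ itself (in the $\sigma(\B,\B^*)$-topology). The plan is to approximate the $\B^{**}$-unitaries by elements of $\B$. Concretely, the weak* topology on $\B^{**}$ restricted to $\B$ agrees with the weak topology $\sigma(\B,\B^*)$ on $\B$, so once I can replace each unitary $u \in U(\B^{**})$ implementing the equivalence by a genuine unitary in $U(\B)$ that is close to it in the relevant seminorms on the finite test sets $\F \subset \A$ and $\G \subset \B^*$, the equivalence transfers. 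The standard tool here is Kaplansky's density theorem together with the fact that $U(\B^{**})$ is the weak* closure of (a related set built from) $U(\B)$, allowing the conjugations $u(i\circ\phi)(a)u^*$ to be approximated weak* by $v\phi(a)v^*$ with $v \in U(\B)$; one must check that this approximation is compatible with the finitely many functionals in $\B^*$ being tested, which is where the argument requires care but no genuinely new idea. Carrying out this density/approximation step rigorously, and verifying both directions of the ``if and only if'' survive the passage between $\B$ and $\B^{**}$, is the technical heart of (2)$\Rightarrow$(3).
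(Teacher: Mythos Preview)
Your proposal is correct and follows essentially the same route as the paper. For (1)$\Rightarrow$(2) both you and the paper simply invoke Proposition~\ref{prop:generalVNAuniqueness}; for (2)$\Rightarrow$(3) both pass to $i:\B\to\B^{**}$, use Lemma~\ref{lem:OrtegaRordamThiel} (equivalently, the remark after Definition~\ref{df:W*-rank}) to translate C*-rank into W*-rank, and then descend back to $\B$ via Kaplansky density of $U(\B)$ in $U(\B^{**})$---the paper phrases the last step as choosing a net $u_\alpha\in U(\B)$ with $u_\alpha\to v$ strongly, so that $u_\alpha\phi(a_i)u_\alpha^*\to v\phi(a_i)v^*$ strongly and hence in each seminorm $\|\cdot\|_{w*,\rho}$ with $\rho\in\B^*$.
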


\begin{proof}

That (1) implies (2)
follows from Proposition \ref{prop:generalVNAuniqueness}.

We now prove that (2) implies (3).  We first prove the ``only if" direction of (3).

It suffices to prove the following:

Let $\epsilon > 0$ and a positive linear functional
$\rho \in \B^*$ be given.  Let  $a_1, a_2, ..., a_n \in \A$
be elements with $\| a_i \| \leq 1$ for $1 \leq i \leq n$.
Then there exists a unitary $u \in \B$ such that
$$\| u \phi(a_i) u^* - \psi(a_i) \|_{w*, \rho} < \epsilon$$
for $1 \leq i \leq n$.

Let $i : \B \rightarrow \B^{**}$ be the natural inclusion map. Now $\textrm{C*-rank}(\phi) = \textrm{C*-rank}(\psi)$
and $\B$ is separable.  Hence, by Lemma \ref{lem:OrtegaRordamThiel}, $rank(i \circ \phi) = rank(i
\circ \psi)$. Also, $\rho$ extends to a unique normal linear functional on $\B^{**}$ (which we
also denote by ``$\rho$"). Hence, by (2), there exists a unitary $v \in \B^{**}$ such that
$$\| v \phi(a_i) v^* - \psi(a_i) \|_{w*, \rho} < \epsilon/10$$
for $1 \leq i \leq n$. Let $\{ u_{\alpha} \}$ be a net of unitaries in $\B$ such that $u_{\alpha}
\rightarrow v$ in the strong operator topology. Hence, for $1 \leq i \leq n$, $u_{\alpha}
\phi(a_i) u_{\alpha}^* \rightarrow v \phi(a_i) v^*$ in the strong operator topology.
So, choosing
$\alpha_0$ sufficiently large, we have that
$$\| u_{\alpha_0} \phi(a_i) u_{\alpha_0}^* - \psi(a_i) \|_{w*, \rho}
< \epsilon$$
for $1 \leq i \leq n$.

   The ``only if" direction for (3) follows from once more considering
the maps $i \circ \phi, i \circ \psi : \A \rightarrow \B^{**}$ and applying Proposition
\ref{prop:generalVNAuniqueness} and Lemma \ref{lem:OrtegaRordamThiel}.
\end{proof}

\begin{rem}
In section 7, we will show that all the statements in Lemma \ref{lem:C*codomainUniqueness} are
equivalent.

We also note that in Lemma \ref{lem:C*codomainUniqueness}, the proof of (1) $\Rightarrow$ (2) does
not require the separability of $\A$.  Also, in the proof of (2) $\Rightarrow$ (3), only
separability of the codomain algebra $\B$ is required.  In other words, if we modify the
definition of the weak uniqueness property (i.e., Definition \ref{df:UniquenessProperty} (b)) to
allow for nonseparable domain C*-algebras but only separable codomain C*-algebras (i.e., allow for
nonseparable $\A$ but only unital separable $\B$) then (2) $\Rightarrow$ (3) (and hence all of
Lemma \ref{lem:C*codomainUniqueness}) will hold without assuming that $\A$ is separable.
\end{rem}

\section{Injectivity and uniqueness}

Recall that for a von Neumann algebra $\Mul$ and a positive normal linear functional $\chi \in
\Mul_*$, $\|. \|_{\chi}$ is the seminorm on $\Mul$ that is given by $\| x \|_{\chi} =_{df}
\chi(x^*x)^{1/2}$ for all $x \in \Mul$.  Recall also that $\| . \|^{\sharp}_{\chi}$ is the
seminorm on $\Mul$ that is given by $\| x \|^{\sharp}_{\chi} =_{df} \frac{\| x \|_{\chi} + \| x^*
\|_{\chi}}{2} = \frac{\chi( x^* x)^{1/2} + \chi( x x^*)^{1/2}}{2}$ for all $x \in \Mul$.

\begin{lem}  Let $\Mul$ be a properly infinite von Neumann algebra and let
$\chi \in \Mul_*$ be a normal state. For every $\epsilon > 0$ and
for each finite set $x_1, x_2,
..., x_m$ of unitaries in $\Mul$, there exist two orthogonal projections $P, Q \in \Mul$ and
finitely many elements $z_1, z_2, ..., z_m$ in the closed unit ball of $P \Mul P$ such that the
following hold:

\begin{enumerate}
\item $P \sim Q \sim 1_{\Mul}$
\item $P + Q = 1_{\Mul}$
\item $\| z_k - x_k \|^{\sharp}_{\chi} < 6 \epsilon$ for $1 \leq k \leq m$
\item $\| Q \|^{\sharp}_{\chi} < \epsilon$
\end{enumerate}
\label{lem:LemmaSixPointZero}
\end{lem}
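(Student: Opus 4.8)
The plan is to realize the halving $P+Q=1_{\Mul}$ by taking $Q$ to be a \emph{small tail projection} coming from a decomposition of $1_{\Mul}$ into countably many copies of itself, and then to take $z_k =_{df} P x_k P$. Conditions $P\sim Q\sim 1_{\Mul}$ and $P+Q=1_{\Mul}$ will be immediate from proper infiniteness, while conditions (3) and (4) will follow once $Q$ is chosen small enough in a single auxiliary functional that simultaneously controls $\chi$ and its conjugates by the $x_k$.

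First I would introduce the normal positive functional $\omega =_{df} \chi + \sum_{k=1}^m (\chi\circ\mathrm{Ad}(x_k) + \chi\circ\mathrm{Ad}(x_k^*))$, where $\mathrm{Ad}(u)(a)=uau^*$; since each $\mathrm{Ad}(x_k^{\pm1})$ is a normal $*$-automorphism and each $x_k$ is unitary, $\omega$ is normal and positive with $\omega(1_{\Mul})=1+2m$. As $\Mul$ is properly infinite, write $1_{\Mul}=\sum_{n\ge 1}e_n$ with $\{e_n\}$ mutually orthogonal projections each Murray--von Neumann equivalent to $1_{\Mul}$, and set $Q_N=_{df}\sum_{n\ge N}e_n$. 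The tails $Q_N$ decrease to $0$, so by normality $\omega(Q_N)=\sum_{n\ge N}\omega(e_n)\to 0$. Fix $\delta$ with $0<\delta\le\epsilon$ and choose $N\ge 2$ with $\omega(Q_N)<\delta^2$; put $Q=_{df}Q_N$ and $P=_{df}1_{\Mul}-Q=e_1+\cdots+e_{N-1}$. Then $Q$ is a sum of infinitely many copies of $1_{\Mul}$, so $Q\sim 1_{\Mul}$, while $e_1\le P\le 1_{\Mul}$ forces $P\sim 1_{\Mul}$ by Schr\"oder--Bernstein; this gives (1) and (2). Condition (4) is then immediate, since $\|Q\|^{\sharp}_{\chi}=\sqrt{\chi(Q)}\le\sqrt{\omega(Q)}<\delta\le\epsilon$.

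For (3) I would take $z_k=_{df}P x_k P$, which lies in the closed unit ball of $P\Mul P$. Expanding $x_k=(P+Q)x_k(P+Q)$ and regrouping gives the clean identity $x_k-Px_kP=x_kQ+Qx_kP$, so it suffices to estimate $\|x_kQ\|^{\sharp}_{\chi}$ and $\|Qx_kP\|^{\sharp}_{\chi}$. Three of the four halves are ``easy'', using only $\chi(Q)<\delta^2$ together with unitarity of $x_k^{\pm1}$ and $P\le 1_{\Mul}$: namely $\|x_kQ\|_{\chi}=\sqrt{\chi(Q)}<\delta$, $\|(x_kQ)^*\|_{\chi}=\sqrt{(\chi\circ\mathrm{Ad}(x_k))(Q)}<\delta$, and $\|(Qx_kP)^*\|_{\chi}=\sqrt{\chi(Qx_kPx_k^*Q)}\le\sqrt{\chi(Q)}<\delta$ (here $x_kPx_k^*\le 1_{\Mul}$). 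In particular $\|x_kQ\|^{\sharp}_{\chi}<\delta$.

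The one genuinely delicate term, and the main obstacle, is $\|Qx_kP\|_{\chi}=\sqrt{\chi(Px_k^*Qx_kP)}$. Writing $R=_{df}x_k^*Qx_k$, one only knows $\chi(R)=(\chi\circ\mathrm{Ad}(x_k^*))(Q)<\delta^2$, and for a \emph{non-tracial} state it is false in general that compression satisfies $\chi(PRP)\le\chi(R)$, so smallness of $\chi(R)$ alone does not bound $\chi(PRP)$. The resolution is that I have also arranged $\chi(Q)<\delta^2$: expanding $P=1_{\Mul}-Q$ gives $\chi(PRP)=\chi(R)-\chi(QR)-\chi(RQ)+\chi(QRQ)$, where $\chi(QRQ)\le\chi(Q)<\delta^2$ and, by Cauchy--Schwarz for $\chi$, $|\chi(QR)|=|\chi(RQ)|\le\sqrt{\chi(Q)}\,\sqrt{\chi(R)}<\delta^2$. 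Hence $\chi(PRP)<4\delta^2$ and $\|Qx_kP\|_{\chi}<2\delta$, so $\|Qx_kP\|^{\sharp}_{\chi}<\tfrac32\delta$. Combining, $\|z_k-x_k\|^{\sharp}_{\chi}\le\|x_kQ\|^{\sharp}_{\chi}+\|Qx_kP\|^{\sharp}_{\chi}<\delta+\tfrac32\delta=\tfrac52\delta$, and taking $\delta=\epsilon$ yields the bound $6\epsilon$ in (3) with room to spare.
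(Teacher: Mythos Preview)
Your argument is correct and follows essentially the same approach as the paper: decompose $1_{\Mul}$ as $\sum_{n\ge 1} e_n$ with $e_n\sim 1_{\Mul}$, take $P$ to be a finite partial sum and $Q$ the tail, choose the cutoff so that $Q$ is small in $\chi$ and in the conjugated states $\chi\circ\mathrm{Ad}(x_k^{\pm 1})$, and set $z_k=Px_kP$. The only cosmetic differences are that the paper works with the finite family $S=\{\chi,\ \chi\circ\mathrm{Ad}(x_k),\ \chi\circ\mathrm{Ad}(x_k^*)\}$ rather than your summed functional $\omega$, uses the three-term split $x_k-z_k=Px_kQ+Qx_kP+Qx_kQ$ rather than your regrouped two-term identity $x_k-z_k=x_kQ+Qx_kP$, and packages your Cauchy--Schwarz estimate for $\chi(PRP)$ as the general inequality $|\rho(b)-\rho(PbP)|<3\epsilon^2$ for $\rho\in S$ and $0\le b\le 1$ before specializing to $b=x_k^*Qx_k$; the underlying computation is the same.
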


\begin{proof}
We may assume that $\epsilon < 1$.

As $1_{\Mul}$ is properly infinite, there exists (see, for example, \cite{DixmierVNA} Corollary
III.8.6.2) a sequence $\{ e_n \}$ of pairwise orthogonal projections in $\Mul$ with $e_n \sim 1$
for all $n$ and $1 = \sum e_n$, where the sum converges in the strong operator topology. Let $S$
be the family of normal states $S =_{df} \{ \chi, \makebox{ } \chi \circ Ad(x_k), \makebox{ } \chi
\circ Ad(x_k^*) : 1 \leq k \leq m \}$. Let $N \geq 1$ be an integer such that $\sum_{n=1}^N
\rho(e_n) > 1 - \epsilon^2$ for all $\rho \in S$.

Set
\begin{equation} \label{equ:definitionofPandQ} 
P =_{df} \sum_{n=1}^N e_n\quad\textrm{and}\quad Q =_{df} 1 - P. 
\end{equation}
Then $\rho(P) > 1 - \epsilon^2$ and $\rho(Q)
< \epsilon^2$ for $\rho \in S$. In particular, $\| Q \|^{\sharp}_{\chi} = \| Q \|_{\chi} <
\epsilon$.

For all $\rho \in S$ and all $b \in \Mul$ with $0 \leq b \leq 1$, we have by the Cauchy--Schwarz
inequality that $| \rho(P b Q) |$, $| \rho(Q b P) |$, $| \rho(Q b Q) |$ all are strictly less than
$\epsilon^2$, and therefore
\begin{equation} | \rho(b) - \rho(P b P) | < 3 \epsilon^2
\label{equ:PCutdownEstimate}
\end{equation}

Set $z_k =_{df} P x_k P$ for $1 \leq k \leq m$.  Then
$$\| x_k - z_k \|_{\chi}^{\sharp} \leq \| P x_k Q \|_{\chi}^{\sharp} +
\| Q x_k P \|_{\chi}^{\sharp} + \| Q x_k Q \|_{\chi}^{\sharp}$$

As $\| a Q \|_{\chi} \leq \| a \| \| Q \|_{\chi}$ for $a \in \Mul$, we have for $1 \leq k \leq m$,
that
$$\| x_k - z_k \|_{\chi}^{\sharp} \leq  2 \| Q \|_{\chi} + (1/2)(\| Q x_k P \|_{\chi} +
\| Q x_k^* P \|_{\chi} )$$

From this, (\ref{equ:PCutdownEstimate}) and (\ref{equ:definitionofPandQ}),
for $1 \leq k \leq m$,
\begin{eqnarray*}
& & \| x_k - z_k \|_{\chi}^{\sharp} \\
& < &  2 \| Q \|_{\chi} + (1/2)( \rho(x_k^* Q x_k) + \rho(x_k Q x_k^*) ) + 3 \epsilon^2 \\
& < & 2 \epsilon + (1/2)( \epsilon^2 + \epsilon^2) + 3 \epsilon^2 \\
& < & 6 \epsilon.
\end{eqnarray*}
\end{proof}

\begin{thm} Let $\Mul$ be a
properly infinite von Neumann algebra.
Then the following statements are equivalent:

\begin{enumerate}
\item $\Mul$ is semidiscrete.
\item Let $\A$ be a C*-algebra and $\phi, \psi : \A \rightarrow \Mul$
two *-homomorphisms. Then $\textrm{W*-rank}(\phi) = \textrm{W*-rank}(\psi)$ if and only if $\phi$ and $\psi$ are
weak* approximately unitarily equivalent.
\end{enumerate}
\label{thm:InjectivityCharacterization}
\end{thm}

\begin{proof}

   That (1) implies (2) follows from Proposition
\ref{prop:generalVNAuniqueness}.

   We now prove that (2) implies (1).

     We now prove the ``if" direction.
By \cite{ElliottNonseparableAFD} and \cite{ElliottWoods}, it suffices to prove that for a normal
state $\chi \in \Mul_*$, for any finite set of unitaries $x_1, x_2, ..., x_m \in U(\Mul)$ and any
$\epsilon > 0$, there exists a finite dimensional unital subalgebra $F$ of $\Mul$ and elements
$y_1, y_2, ..., y_m \in F$ such that
$$\| x_k - y_k \|^{\sharp}_{\chi} < \epsilon$$
for $1 \leq k \leq m$.

By Lemma \ref{lem:LemmaSixPointZero}, let $P, Q \in \Mul$ be orthogonal projections and $z_k$ (for
$1 \leq k \leq m$) an element of $P \Mul P$ with norm less than or equal to one such that
\begin{equation} \label{equ:Barcelona} \end{equation}
\begin{enumerate}
\item $P \sim Q \sim 1_{\Mul}$
\item $P + Q = 1_{\Mul}$
\item $\| z_k - x_k \|^{\sharp}_{\chi} < \epsilon/2$ for $1 \leq k \leq m$
and
\item $\| Q \|^{\sharp}_{\chi} < \epsilon/2$.
\end{enumerate}

   Let $\A$ be the unital C*-subalgebra of $P \Mul P$ that is generated
by $\{ P, z_1, z_2, ..., z_m \}$. Let $\gamma : \A \rightarrow Q \Mul Q$ be a full unital
*-homomorphism. ($\gamma$ is \emph{full} means that for every positive $a \in \A - \{ 0 \}$,
$\gamma(a)$ is a full element of $Q \Mul Q$.) Let $\phi : \A \rightarrow \Mul$ be the unital full
*-homomorphism that is given by $\phi(a) =_{df} a \oplus \gamma(a)$ for all $a \in \A$.

Since $\Mul$ is properly infinite, $\Mul = \N \overline{\otimes} \mathbb{B}(\h)$ where $\N$ is a
von Neumann algebra and $\h$ is a separable infinite dimensional Hilbert space.  Hence, let $\psi
: \A \rightarrow 1_{\N} \otimes \mathbb{B}(\h)$ be any unital full *-homomorphism.

Since $\phi$ and $\psi$ are both full, $\textrm{W*-rank}(\phi) = \textrm{W*-rank}(\psi)$. Hence, by hypothesis,
$\phi$ and $\psi$ are weak* approximately unitarily equivalent. Hence, by Lemma
\ref{lem:EquivalentEquivalences}, $\phi$ and $\psi$ are $\sigma$-strong* approximately unitarily
equivalent. Hence, there exists a net $\{ u_{\alpha} \}_{\alpha \in I}$ of unitaries in $\Mul$
such that for all $a \in \A$, $u_{\alpha} \psi(a) u_{\alpha}^* \rightarrow \phi(a)$ in the
$\sigma$-strong* topology. In particular, for $1 \leq k \leq m$, $u_{\alpha} \psi(z_k)
u_{\alpha}^* \rightarrow \phi(z_k) = z_k \oplus \gamma(z_k)$ in the $\sigma$-strong* topology.
From this and (\ref{equ:Barcelona}), there exists $\alpha_0 \in I$ such that for $1 \leq k \leq
m$,
$$\| x_k - u_{\alpha_0} \psi(z_k) u_{\alpha_0}^* \|^{\sharp}_{\chi}
< \epsilon.$$

   Now since $1_{\N} \otimes \mathbb{B}(\h)$ is approximately
finite dimensional,  there exists a sequence $\{ F_n \}$ of finite dimensional unital
C*-subalgebras of $1_{\N} \otimes \mathbb{B}(\h)$ and for $1 \leq k \leq m$, there exists an
element $w_{k, n} \in F_{n}$ such that $w_{k, n} \rightarrow \psi(z_k)$ in the $\sigma$-strong*
topology. Hence, there exists $N \geq 1$ such that for $1 \leq k \leq m$,
$$\| x_k - u_{\alpha_0} w_{k, N} u_{\alpha_0}^* \|^{\sharp}_{\chi}
< \epsilon$$ and $u_{\alpha_0} w_{k, N} u_{\alpha_0}^*$ is an element of the finite dimensional
C*-algebra $u_{\alpha_0} F_{N} u_{\alpha_0}^*$.

\end{proof}

For finite von Neumann algebras, we have the following:

\begin{thm}
Let $\Mul$ be a type $II_1$ factor with separable predual, and (unique)
faithful normal tracial state $\tau$.
Then the following statements are equivalent:
\begin{enumerate}
\item $\Mul$ is injective.
\item For every $C^*$-algebra $\A$,
for all  $*$-homomorphisms $\phi, \psi : \A \rightarrow \Mul \overline{\otimes} \Mul$,
$\textrm{W*-rank}(\phi) = \textrm{W*-rank}(\psi)$ if and only if $\phi$ and $\psi$ are $\sigma$-strong*
approximately unitarily equivalent.
\end{enumerate}
\end{thm}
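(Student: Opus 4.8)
The plan is to prove the two implications separately: the forward one is a direct application of Proposition \ref{prop:II_1uniqueness}, while the reverse one passes through the notion of an approximately inner half-flip. Throughout, I write $\N =_{df} \Mul \overline{\otimes} \Mul$; this is again a type $II_1$ factor with separable predual, and its unique normal tracial state is $\tau \otimes \tau$.

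For $(1) \Rightarrow (2)$, I would use that a von Neumann tensor product of injective algebras is injective, so if $\Mul$ is injective then so is $\N$. Since $\N$ is a countably decomposable finite von Neumann algebra, Proposition \ref{prop:II_1uniqueness} applies with $\N$ as codomain: for any $*$-homomorphisms $\phi, \psi : \A \to \N$, one has $(\tau \otimes \tau) \circ \phi = (\tau \otimes \tau) \circ \psi$ if and only if $\phi$ and $\psi$ are $\sigma$-strong* approximately unitarily equivalent. As $\N$ is a \emph{factor}, $\tau \otimes \tau$ is its only normal tracial state, so by the remark following Definition \ref{df:rank} (which rests on \cite{BlackadarGeneralBook} III.2.5.7) the trace condition is equivalent to $\textrm{W*-rank}(\phi) = \textrm{W*-rank}(\psi)$. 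Chaining the two equivalences yields statement $(2)$.

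For $(2) \Rightarrow (1)$, I would test the hypothesis on the two coordinate embeddings $\phi, \psi : \Mul \to \N$ given by $\phi(a) =_{df} a \otimes 1$ and $\psi(a) =_{df} 1 \otimes a$. These have the same W*-rank: for positive $a \in \Mul$ the relevant support projections are $supp(a) \otimes 1$ and $1 \otimes supp(a)$, which carry the same trace $\tau(supp(a))$ in the factor $\N$ and are therefore Murray--von Neumann equivalent. Hence $(2)$ furnishes a net $\{ u_\alpha \}$ of unitaries in $\N$ with $u_\alpha (a \otimes 1) u_\alpha^* \to 1 \otimes a$ in the $\sigma$-strong* topology for every $a \in \Mul$ (together with a second net implementing the reverse convergence). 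Equivalently, the two embeddings $a \mapsto a \otimes 1$ and $a \mapsto 1 \otimes a$ of $\Mul$ into $\Mul \overline{\otimes} \Mul$ are approximately unitarily equivalent; that is, $\Mul$ has an approximately inner half-flip.

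The crux, and the step I expect to be hardest, is to conclude injectivity of $\Mul$ from this approximately inner half-flip. This is the von Neumann-algebraic analogue of the theorem of Effros and Rosenberg that a separable unital C*-algebra with approximately inner half-flip is nuclear; in the present $II_1$ setting it should yield semidiscreteness, and hence injectivity via \cite{ConnesInjective}. I would either invoke this directly or reprove it by using the net $\{ u_\alpha \}$ to produce, for each finite collection of unitaries of $\Mul$ and each $\epsilon > 0$, the finite-dimensional $\| \cdot \|_2$-approximations required by the characterization of hyperfiniteness from \cite{ElliottNonseparableAFD}, \cite{ElliottWoods} that was already exploited in the proof of Theorem \ref{thm:InjectivityCharacterization}. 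The essential difficulty is that this argument must generate the finite-dimensional approximants intrinsically from the half-flip: one cannot instead route the comparison through an ambient hyperfinite model, since matching the W*-rank of $\phi$ would require a trace-preserving copy of $\Mul$ inside a hyperfinite algebra, which for a non-injective $\Mul$ need not exist, and it is exactly this obstruction that the half-flip construction circumvents.
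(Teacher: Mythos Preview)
Your $(1)\Rightarrow(2)$ argument matches the paper's exactly.

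For $(2)\Rightarrow(1)$ you and the paper diverge. You test hypothesis $(2)$ on the coordinate embeddings $\Mul\to\N$, obtaining an approximately inner \emph{half}-flip, and then face the problem of deducing injectivity from that --- a step you correctly flag as the crux and leave essentially open. The paper avoids this entirely by choosing a richer test algebra: it takes $\A$ to be a separable unital C*-subalgebra of $\N=\Mul\overline{\otimes}\Mul$ that is $\|\cdot\|_\tau$-dense, with $\phi$ the inclusion and $\psi=\sigma|_{\A}$ the restriction of the \emph{full} flip automorphism $\sigma(x\otimes y)=y\otimes x$. Since $\tau\circ\sigma=\tau$, hypothesis $(2)$ yields unitaries $u_n\in\N$ with $\|u_n a u_n^*-\sigma(a)\|_\tau\to 0$ for all $a\in\A$; density then gives that $\sigma$ itself is approximately inner, and Connes' theorem \cite{ConnesInjective} finishes immediately.

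So the difference is purely in the choice of domain algebra for the test. By feeding all of $\N$ (up to $\|\cdot\|_\tau$-density) into hypothesis $(2)$ rather than just the first tensor leg, the paper lands directly on Connes' flip criterion and sidesteps the half-flip $\Rightarrow$ injective implication altogether. Your route is not wrong --- that implication is true --- but it is a genuine extra theorem, and your sketch of how to prove it (producing finite-dimensional $\|\cdot\|_2$-approximants from the half-flip unitaries alone) is where the real work would lie. The paper's observation that hypothesis $(2)$ already lets you conjugate \emph{arbitrary} elements of $\N$, not just elementary tensors $a\otimes 1$, is exactly what buys the shortcut.
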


\begin{proof}
That (1) implies (2) follows from Proposition \ref{prop:II_1uniqueness} and
the fact that if $\Mul$ is the injective type $II_1$ factor then
$\Mul \overline{\otimes} \Mul$ is also the injective type $II_1$ factor.

We now prove that (2) implies (1). By \cite{ConnesInjective}, it is enough to show that the flip
automorphism $\sigma \in Aut(\Mul \overline{\otimes} \Mul)$, which is given by $\sigma(x \otimes
y) = y \otimes x$ ($x, y \in \Mul$), is approximately inner.

  Let $\A$ be a separable unital $C^*$-subalgebra of $\Mul \overline{\otimes}
\Mul$ such that $\A$ is $\| . \|_{\tau}$-dense in $\Mul \overline{\otimes} \Mul$.

  Note that on $\A$, $\tau \circ \sigma = \tau$.
Hence, by assumption, there exists a sequence $\{ u_n \}_{n=1}^{\infty}$
of unitaries in $\Mul \overline{\otimes} \Mul$ such that for all $a \in \A$,
$$\| \sigma(a) - u_n a u_n^* \|_{\tau} \rightarrow 0$$
as $n \rightarrow \infty$.

    As $\Mul$ is type $II_1$ and $\A$ is $\| . \|_{\tau}$-dense in
$\Mul \overline{\otimes} \Mul$, it follows that $\sigma$ is approximately inner, as required.
\end{proof}

\section{Nuclearity and uniqueness}

\begin{thm}  Let $\A$ be a separable C*-algebra.
Then the following are equivalent:
\begin{enumerate}
\item $\A$ is nuclear.
\item $\A$ has the weak* uniqueness property.
\item $\A$ has the weak uniqueness property.
\end{enumerate}
\label{thm:nuclearitycharacterization}
\end{thm}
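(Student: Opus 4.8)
The plan is to close the circle of implications. By Lemma \ref{lem:C*codomainUniqueness}, we already have $(1) \Rightarrow (2) \Rightarrow (3)$, so it suffices to prove $(3) \Rightarrow (1)$; that is, I must show that if a separable C*-algebra $\A$ has the weak uniqueness property, then $\A$ is nuclear. The strategy is to exploit the weak uniqueness property by feeding it a judiciously chosen pair of weakly approximately unitarily equivalent maps into a unital separable C*-algebra, and to extract from this equivalence an approximate factorization of the identity on $\A$ through finite-dimensional matrix algebras, which is exactly the completely positive approximation property characterizing nuclearity.

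First I would set up the test codomain. Following the model of the proof of Theorem \ref{thm:InjectivityCharacterization}, where semidiscreteness was forced out of a uniqueness hypothesis by building full homomorphisms into an ambient properly infinite algebra, I would embed $\A$ into a suitable unital separable C*-algebra $\B$ that contains enough room (for instance a copy of $\A$ together with a stably rich or properly infinite companion, such as $\A \otimes \mathcal{O}_2$ or an amplification inside $\mathcal{O}_2$) so that two natural full *-homomorphisms $\phi, \psi : \A \rightarrow \B$ automatically satisfy $\textrm{C*-rank}(\phi) = \textrm{C*-rank}(\psi)$. The point of fullness is that it trivializes the rank comparison via the dimension-function criterion of Lemma \ref{lem:OrtegaRordamThiel}: for full positive elements all the lower semicontinuous dimension functions $d_\tau$ agree (being infinite or comparable), so the C*-rank hypothesis is free of charge and the weak uniqueness property then hands me nets of unitaries implementing weak approximate unitary equivalence. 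The design of $\phi$ and $\psi$ must be such that one of them visibly factors through a controllable, approximately finite-dimensional object while the other is a genuine copy of the identity on $\A$.

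The key step is then to convert the weak approximate unitary equivalence into a completely positive approximation. From $u_\alpha \phi(a) u_\alpha^* \to \psi(a)$ in the weak topology, I would compose with a conditional expectation or a state-induced slice map and use the standard point-weak to point-norm convexity trick (passing to convex combinations of unitary conjugates, as in the classical Voiculescu/Arveson arguments) to upgrade weak convergence to norm convergence of completely positive contractions. Combining the resulting c.p.\ maps with the finite-dimensional structure built into the companion map yields, for each finite set $\F \subset \A$ and each $\epsilon > 0$, c.p.\ contractions $\A \xrightarrow{\; S\;} \M_k(\mathbb{C}) \xrightarrow{\; R\;} \A$ with $\|R \circ S(a) - a\| < \epsilon$ for $a \in \F$; this is precisely the c.p.\ approximation property, hence nuclearity of $\A$.

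The hard part will be arranging the test codomain $\B$ and the maps $\phi, \psi$ so that the rank equality is automatic \emph{and} the extracted approximation genuinely passes through finite-dimensional matrix algebras. The danger is that weak approximate unitary equivalence is too weak to control norm behavior, so the real obstacle is the convexity upgrade from weak to norm convergence of the completely positive maps, together with ensuring that the intermediate algebra one factors through is actually finite-dimensional rather than merely approximately finite-dimensional or abstractly nuclear. Making this extraction clean — so that it delivers honest finite-rank c.p.\ maps and not a circular reappearance of nuclearity — is where the argument must be most careful, and I expect to lean on the separability of $\A$ and a sequential diagonalization to manufacture the finite-dimensional factorization uniformly over an increasing exhaustion of $\A$.
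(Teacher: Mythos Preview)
Your skeleton is right --- close the circle via $(3) \Rightarrow (1)$, model the argument on Theorem~\ref{thm:InjectivityCharacterization}, and trivialize the rank hypothesis by making both test maps full. But the implementation you sketch has a genuine gap, precisely at the point you yourself flag as dangerous.

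The problem is this: if you stay entirely inside a separable C*-algebra $\B$ (such as $\A \otimes \mathcal{O}_2$) and ask for a full *-homomorphism $\psi : \A \to \B$ that ``visibly factors through a controllable, approximately finite-dimensional object,'' you are stuck. A *-homomorphism from an infinite-dimensional $\A$ cannot factor through a finite-dimensional algebra and remain injective, let alone full; and if instead it lands in an AF or otherwise nuclear C*-subalgebra of $\B$, then extracting finite-dimensional factorizations from that already presupposes nuclearity --- the circularity you were worried about. Your convexity upgrade from point-weak to point-norm is actually fine (Mazur on finite products of $\B$), but it only produces c.p.\ maps of the form $a \mapsto \sum_i \lambda_i u_i \psi(a) u_i^*$, and these inherit no finite-dimensional factorization unless $\psi$ already had one. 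So the scheme cannot deliver honest CPAP for $\A$ without begging the question.

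The paper escapes this trap by \emph{not} aiming at CPAP directly. Instead it proves that the von Neumann algebra $\Mul =_{df} \A^{**} \otimes \mathbb{B}(\h)$ is injective (equivalently approximately finite-dimensional, via the Elliott--Woods criterion), and then invokes Choi--Effros to conclude that $\A$ is nuclear. Inside $\Mul$ one has the genuinely AFD von Neumann subalgebra $1_{\A^{**}} \otimes \mathbb{B}(\h)$, into which a full unital $\psi$ on $\M_N(\tilde\A)$ can land without any circularity; and the approximation needed for Elliott--Woods is only in the $\| \cdot \|^{\sharp}_{\rho}$ seminorms, not in norm. The weak uniqueness property is applied with codomain a carefully chosen \emph{separable} properly infinite C*-subalgebra $\C \subset \Mul$ (so Definition~\ref{df:UniquenessProperty}(b) is legitimately invoked), and the resulting weak approximate unitary equivalence is then upgraded to $\sigma$-strong* inside the ambient $\Mul$ via Lemma~\ref{lem:EquivalentEquivalences}, using only that the maps are unital --- no convexity argument is needed. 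The missing idea in your plan is exactly this passage to the bidual: prove injectivity of $\A^{**}$ rather than chase CPAP in a C*-algebra.
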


\begin{proof}

  The directions $(1) \Rightarrow (2) \Rightarrow (3)$ follow from
Lemma \ref{lem:C*codomainUniqueness}.

   We now prove that (3) implies (1).

   Let $\h$ be a separable infinite dimensional Hilbert space. We will first prove that the von Neumann algebra
$\Mul =_{df} \A^{**} \otimes \mathbb{B}(\h)$ is injective (where the tensor product is the spatial
tensor product).

   By \cite{ElliottWoods} (see also \cite{ElliottNonseparableAFD}), it suffices to prove the following:  Let $\epsilon > 0$ and let
$\rho \in \Mul_*$ be a normal state. Let $x_1, x_2,...., x_m \in \Mul$ be elements such that for
$1 \leq k \leq m$, $x_k = a_k \otimes b_k$ where  $a_k \in \A$, $b_k$ compact operators in
$\mathbb{B}(\h)$ and $\| a_k \|, \| b_k \| \leq 1$. Then there exists a finite dimensional unital
C*-subalgebra $F \subset \Mul$ and $y_k \in F$ (for $1 \leq k \leq m$) such that
$$\| x_k - y_k \|^{\sharp}_{\rho} < \epsilon$$
for $1 \leq k \leq m$.

    Since $\mathbb{B}(\h)$ is injective, we may assume that
there exists a finite dimensional simple
C*-subalgebra $F_0 \subseteq \mathbb{B}(\h)$ such that
$b_k \in F_0$ for $1 \leq k \leq m$ and
$1_{\mathbb{B}(\h)} - 1_{F_0}$ is Murray-von Neumann equivalent to $1$ in
$\mathbb{B}(\h)$.
Let $P =_{df} 1_{\Mul} - 1_{\A^{**}} \otimes 1_{F_0}$.
We may also assume that $\rho(1_{\Mul} -  P)
< \epsilon/3$.

Suppose that $N \geq 1$ is such that $F_0 \cong \M_N$ (N by N matrices over complex numbers). Let
$\tilde{\A} =_{df} \A + \mathbb{C} 1_{\A^{**}}$.  (Note that if $\A$ is unital then $\tilde{\A} =
\A$.) Let $\{ e_{i,j} \}_{1 \leq i,j \leq N}$ be a system of matrix units for $\M_N$ (i.e., for
$F_0$). Consider the natural map $i : \tilde{\A} \otimes \M_N \rightarrow \tilde{\A} \otimes F_0 :
a \otimes e_{i,j} \mapsto a \otimes e_{i,j}$ for all $a \in \tilde{\A}$ and for $1 \leq i,j \leq
N$. Then $x_k \in i(\A \otimes \M_N)$ for $1 \leq k \leq m$. Let $\gamma : \tilde{\A} \otimes \M_N
\rightarrow P \Mul P$ be a full unital *-homomorphism. (Recall that $\gamma$ is \emph{full} means
that for all positive $a \in \M_N(\tilde{\A}) - \{ 0 \}$, $\gamma(a)$ is a full element of $P \Mul
P$.)

   Let $\phi : \M_N(\tilde{\A}) \rightarrow \Mul$ be the full unital
*-homomorphism that is given by $\phi=_{df} i \oplus \gamma$. Note that
\begin{equation}
\| \phi(x_k) - x_k \|^{\sharp}_{\rho} < \epsilon/3
\label{equ:mapapproximation}
\end{equation}
for $1 \leq k \leq m$.

Let $\psi : \M_N(\tilde{\A}) \rightarrow 1_{\A^{**}} \otimes \mathbb{B}(\h)$ be a full unital
*-homomorphism. For $1 \leq i,j \leq N$, let $p_{i,j} =_{df} \phi(1_{\A^{**}} \otimes e_{i,j})$
and $q_{i,j} =_{df} \psi(1_{\A^{**}} \otimes e_{i,j})$.  Since $\phi$ and $\psi$ are both full
maps, $p_{i,i}$ is Murray--von Neumann equivalent to $q_{i,i}$ in $\Mul$ for $1 \leq i \leq N$.
Thus, conjugating $\psi$ (and hence conjugating $1_{\A^{**}} \otimes \mathbb{B}(\h)$) by a unitary
if necessary, we may assume that $p_{i,j} = q_{i,j}$ for $1 \leq i,j \leq N$.

Let $\C \subseteq \Mul$ be a separable C*-subalgebra such that the following hold:

\begin{enumerate}
\item $\C$ is unital and $1_{\C} = p_{1,1}$,
\item $\C$ is properly infinite,
\item  $\phi(\tilde{\A} \otimes e_{1,1})$ and
$\psi(\tilde{\A} \otimes e_{1,1})$ are both full C*-subalgebras of $\C$.
\end{enumerate}

Since $\phi |_{\tilde{\A} \otimes e_{1,1}}$ and $\psi |_{\tilde{\A} \otimes e_{1,1}}$ are both
full maps into $\C$ and since $\C$ is properly infinite, we have that $\textrm{C*-rank}(\phi |_{\tilde{\A}
\otimes e_{1,1}}) = \textrm{C*-rank}(\psi |_{\tilde{\A} \otimes e_{1,1}})$ (as maps into $\C$). Hence,
since $\A$ has the weak uniqueness property, $\phi |_{\tilde{\A} \otimes e_{1,1}}$ and $\psi
|_{\tilde{\A} \otimes e_{1,1}}$ are weakly approximately unitarily equivalent (as maps into $\C$).
Hence, as maps into $\Mul$, $\phi$ and $\psi$ are weak* approximately unitarily equivalent.

Since $\phi$ and $\psi$ are unital maps, by Lemma \ref{lem:EquivalentEquivalences}, $\phi$ and
$\psi$ are $\sigma$-strong* approximately unitarily equivalent (as maps into $\Mul$). From this
and (\ref{equ:mapapproximation}), let $u \in \Mul$ be a unitary such that
\begin{equation}
\| x_k - u \psi(x_k) u^* \|^{\sharp}_{\rho} < \epsilon/3
\label{equ:Boudreaux}
\end{equation}
for $1 \leq k \leq m$.

   Since $\mathbb{B}(\h)$ is an injective von Neumann algebra,
there exists a sequence $\{ F_{n} \}$ of finite dimensional C*-subalgebras of $1_{\A^{**}} \otimes
\mathbb{B}(\h)$ and elements $c_{n, k} \in F_n$ (for $1 \leq k \leq m$) such that $c_{n, k}
\rightarrow \psi(x_k)$ in the $\sigma$-strong* topology for $1 \leq k \leq m$. Note that since
$Ad(u) \rho \in \Mul_*$, we must have that $\| u \psi(x_k) u^* - u c_{n, k} u^* \|^{\sharp}_{\rho}
= \| \psi(x_k) - c_{n, k } \|^{\sharp}_{Ad(u) \rho} \rightarrow 0$. From this and
(\ref{equ:Boudreaux}), there exists $n_0 \geq 1$ such that
$$\| x_k - u c_{n_0, k} u^*  \|^{\sharp}_{\rho} < \epsilon$$
for $1 \leq k \leq m$. Since $u F_{n_0} u^*$ is finite dimensional, we are done.

   Since $\A^{**} \otimes \mathbb{B}(\h)$ is injective,
$\A^{**}$ is injective.  Hence, by \cite{ChoiEffros},  $\A$ is nuclear.
\end{proof}

\begin{rem}
In Theorem \ref{thm:nuclearitycharacterization}, the direction  (2) $\Rightarrow$ (1) does not
require separability of $\A$.  The proof is a modification of the proof of (3) $\Rightarrow$ (1).

Indeed, if we modify Definition \ref{df:UniquenessProperty} (b) (the definition of the weak
uniqueness property) to allow for nonseparable $\A$ (though still separable unital codomains
$\B$), the proof of (3) $\Rightarrow$ (1) would still work with minor modifications.  From this
and the remarks after Lemma \ref{lem:C*codomainUniqueness}, we have that, with a modification of
Definition \ref{df:UniquenessProperty}, all the statements in Theorem
\ref{thm:nuclearitycharacterization} are equivalent.
\end{rem}

\end{document}